\documentclass[%
  a4paper,
  onecolumn,
]{mypreprint}


\usepackage[english]{babel}

\usepackage{cleveref}

\usepackage{amsmath}
\usepackage{amssymb}
\usepackage{amsthm}

\usepackage[linesnumbered, lined, algoruled]{algorithm2e}
\Crefname{algocf}{Algorithm}{Algorithms}

\usepackage{graphicx}

\usepackage{subcaption}

\usepackage{tikz}
  \usetikzlibrary{backgrounds}
  \usetikzlibrary{calc}
  \usetikzlibrary{patterns}
  \usetikzlibrary{positioning}
  \usetikzlibrary{shapes}
  
\usepackage{pgfplots}
\usepackage{pgfplotstable}
  \pgfplotsset{%
    compat = 1.13,
    colormap name = viridis
  }
  \usepgfplotslibrary{external}
  \tikzset{external/system call = {%
    pdflatex \tikzexternalcheckshellescape
      -halt-on-error
      -interaction=batchmode
      -jobname "\image" "\texsource"}}
  \tikzexternalize[prefix = graphics/externalize/]
  \tikzexternaldisable

\newcommand{%
  \tikzexternalenable%
  \tikzsetnextfilename{}%
  \input{graphics/.tikz}%
  \tikzexternaldisable%
}[1]{%
  \tikzexternalenable%
  \tikzsetnextfilename{#1}%
  \input{graphics/#1.tikz}%
  \tikzexternaldisable%
}


\theoremstyle{plain}\newtheorem{corollary}{Corollary}
\theoremstyle{plain}\newtheorem{lemma}{Lemma}
\theoremstyle{plain}\newtheorem{proposition}{Proposition}
\theoremstyle{plain}\newtheorem{theorem}{Theorem}

\renewcommand{\rm}[1]{\ensuremath{\mathrm{#1}}}
\renewcommand{\sf}[1]{\ensuremath{\mathsf{#1}}}


\DeclareMathOperator{\mspan}{span}
\newcommand{\trans}{\ensuremath{\mkern-1.5mu\sf{T}}}

\newcommand{\C}{\ensuremath{\mathbb{C}}}
\newcommand{\R}{\ensuremath{\mathbb{R}}}
\newcommand{\N}{\ensuremath{\mathbb{N}}}

\newcommand{\nh}{\ensuremath{N}}
\newcommand{\nmin}{\ensuremath{n_{\min}}}

\newcommand{\np}{\ensuremath{p}}
\newcommand{\dT}{\ensuremath{T}}
\newcommand{\neig}{\ensuremath{n_{\rm{u}}}}
\newcommand{\neigr}{\ensuremath{r}}

\newcommand{\Xcal}{\ensuremath{\mathcal{X}}}
\newcommand{\Xcaln}{\ensuremath{\mathcal{X}_{0}^{\rm{n}}}}

\newcommand{\Vcal}{\ensuremath{\mathcal{V}}}
\newcommand{\Wcal}{\ensuremath{\mathcal{W}}}
\newcommand{\Wcalt}{\ensuremath{\widetilde{\mathcal{W}}}}
\newcommand{\Fcal}{\ensuremath{\mathcal{F}}}

\newcommand{\Xn}{\ensuremath{X^{\rm{n}}}}
\newcommand{\xn}{\ensuremath{x^{\rm{n}}}}
\newcommand{\dxn}{\ensuremath{\frac{\rm{d}}{\rm{d} t} \xn}}
\newcommand{\Un}{\ensuremath{U^{\rm{n}}}}
\newcommand{\un}{\ensuremath{u^{\rm{n}}}}

\newcommand{\Ar}{\ensuremath{\widehat{A}}}
\newcommand{\Br}{\ensuremath{\widehat{B}}}

\newcommand{\Kr}{\ensuremath{\widehat{K}}}
\newcommand{\Xr}{\ensuremath{\widehat{X}}}

\newcommand{\Sigmar}{\ensuremath{\widehat{\Sigma}}}


\newcommand{\Uc}{\ensuremath{\check{U}}}
\newcommand{\Vc}{\ensuremath{\check{V}}}

\newcommand{\Sigmac}{\ensuremath{\check{\Sigma}}}


\newcommand{\Wt}{\ensuremath{\widetilde{W}}}

\newcommand{\Xt}{\ensuremath{\widetilde{X}}}

\newcommand{\dx}{\ensuremath{\frac{\rm{d}}{\rm{d} t} x}}
\newcommand{\xb}{\ensuremath{\bar{x}}}
\newcommand{\ub}{\ensuremath{\bar{u}}}
\newcommand{\Ug}{\ensuremath{\breve{U}}}
\newcommand{\Xg}{\ensuremath{\breve{X}}}

\newcommand{\datatrip}{(U_{-}, X_{-}, X_{+})}
\newcommand{\datatripn}{(\Un_{-}, \Xn_{-}, \Xn_{+})}
\newcommand{\datatripg}{(\Ug_{-}, \Xg_{-}, \Xg_{+})}

\newcommand{\datatripr}{(U_{-}, \Xr_{-}, \Xr_{+})}


\definecolor{matlabblue}{HTML}{0072BD}
\definecolor{matlaborange}{HTML}{D95319}
\definecolor{matlabyellow}{HTML}{EDB120}
\definecolor{matlabpurple}{HTML}{7E2F8E}
\definecolor{matlabgreen}{HTML}{77AC30}
\definecolor{matlablightblue}{HTML}{4DBEEE}
\definecolor{matlabred}{HTML}{A2142F}

\definecolor{cblred}{HTML}{E41A1C}
\definecolor{cblpurple}{HTML}{984EA3}
\definecolor{mydarkgreen}{HTML}{00624C}

\tikzstyle{sline} = [
  solid,
  line width = 1.5pt
]

\tikzstyle{rline} = [
  solid,
  line width = 1.7pt
]

\tikzstyle{dline} = [
  dashed,
  line width = 1.5pt
]

\tikzstyle{samplecol} = [
  line width   = 1.3pt,
  fill opacity = 0.75
]

\newcommand{\plotfontsize}{\small}

\pgfplotscreateplotcyclelist{stateslist}{
  {matlabblue, sline},
  {matlaborange, sline},
  {matlabpurple, sline},
  {matlabgreen, sline}
}

\pgfplotscreateplotcyclelist{samplelist}{
  {draw = matlabblue, fill = matlabblue, samplecol},
  {draw = matlaborange, fill = matlaborange, samplecol},
  {draw = matlabpurple, fill = matlabpurple, samplecol},
  {draw = matlabred, fill = matlabred, samplecol},
  {draw = black, fill = black, samplecol}
}

\pgfplotscreateplotcyclelist{trainlist}{
  {cblred, rline},
  {mydarkgreen, rline},
  {cblpurple, rline}
}


\begin{document}


\title{Context-aware controller inference for stabilizing dynamical systems
  from scarce data}
  
\author[$\ast$,1]{Steffen W. R. Werner}
\affil[$\ast$]{Courant Institute of Mathematical Sciences, New York
  University, New York, NY 10012, USA}
\affil[1]{\email{steffen.werner@nyu.edu}, \orcid{0000-0003-1667-4862}}
  
\author[$\ast$,2]{Benjamin Peherstorfer}
\affil[2]{\email{pehersto@cims.nyu.edu}, \orcid{0000-0002-1558-6775}}

\shorttitle{Context-aware controller inference for stabilization}
\shortauthor{S. W. R. Werner, B. Peherstorfer}
\shortdate{2023-01-18}
\shortinstitute{}
  
\keywords{%
  nonlinear systems,
  stabilizing feedback,
  data-driven control,
  con\-text-aware learning
}

\msc{}

\abstract{%
  This work introduces a data-driven control approach for stabilizing
  high-dimensional dynamical systems from scarce data.
  The proposed context-aware controller inference approach is based on the
  observation that controllers need to act locally only on the unstable dynamics
  to stabilize systems.
  This means it is sufficient to learn the unstable dynamics alone, which are
  typically confined to much lower dimensional spaces than the high-dimensional
  state spaces of all system dynamics and thus few data samples are
  sufficient to identify them.
  Numerical experiments demonstrate that context-aware controller inference
  learns stabilizing controllers from orders of magnitude fewer data samples
  than traditional data-driven control techniques and variants of
  reinforcement learning.
  The experiments further show that the low data requirements of context-aware
  controller inference are especially beneficial in data-scarce engineering
  problems with complex physics, for which learning complete system dynamics is
  often intractable in terms of data and training costs.
}

\novelty{}

\maketitle


\section{Introduction}%
\label{sec:intro}

The design of feedback controllers for stabilizing dynamical systems is a
ubiquitous task in science and engineering.
Standard control techniques rely on the availability of models of the system
dynamics to construct controllers.
If models are unavailable, then typically models of system dynamics are learned
from data first and then standard control techniques are applied to the learned
models~\cite{BruK19}.
However, learning models of complex system dynamics can require large
amounts of data because learning models means identifying generic descriptions
that often also include information about the systems that are unnecessary for
the specific task of finding stabilizing controllers.
Additionally, collecting data from unstable systems is challenging because
without a stabilizing controller the system cannot be observed for a long time
before the dynamics become unstable and thus data collection becomes
uninformative.
In this work, we propose context-aware controller inference that stabilizes
systems based on the unstable dynamics that are learned from few state
observations by leveraging that unstable dynamics typically evolve in spaces of
much lower dimension than the dimension of state spaces of all---stable
and unstable---dynamics.
We show that bases of the spaces of unstable dynamics can be efficiently
estimated from derivative information of systems.
The corresponding numerical procedure of context-aware controller inference
achieves orders of magnitude reductions in the number of data samples that
are required for finding stabilizing controllers compared to traditional
data-driven methods that first identify models of the full dynamics that
are subsequently stabilized.

Many approaches for model-free, data-driven controller design based on
parameter tuning have been developed; see,
e.g.,~\cite{CamLS02, FliJ13, LeqGMetal03, SafT95}.
However, these methods are limited to systems with a small
number of observables and inputs.
In machine learning, reinforcement
learning~\cite{SilLHetal14, LilHPetal15, PerUS21} has been successfully applied
for the data-driven design of controllers, employing similar ideas as in the
parameter tuning methods mentioned above.
With the development of model reduction, efficient approaches for
modeling reduced dynamical systems from data have been developed, such as
dynamic mode decomposition and operator
inference~\cite{Sch10, TuRLetal14, PehW16, Peh20, SwiKHetal20}, sparse
identification methods~\cite{BruPK16, SchTW18}, and
the Loewner framework~\cite{MayA07,SchU16,PehGW17,SchUBetal18, AntGI16,GosA18}.
These methods inform many data-driven controller techniques that first identify
a model of the dynamics that is then stabilized by classical control approaches.
However, it has been shown that less data are required for the task of
stabilization than for the identification of models, which is the motivation for
this work~\cite{DePT20, VanETetal20, WerP22, VanD96}.

We introduce context-aware controller inference, which is a new data-driven
approach for learning stabilizing controllers from scarce data and that is
applicable to systems with nonlinear dynamics.
Context-aware controller inference exploits that controllers need to act only on
the unstable parts of system dynamics for
stabilization~\cite{BenCQ01, BenCQetal00, HeM94, Sim96} and that spaces in which
the unstable dynamics evolve can be estimated efficiently and cheaply from
gradients that are obtained from adjoints.
If $\neigr$ is the dimension of the space induced by the unstable dynamics, then
there always exist $\neigr$ states that are sufficient to be observed for
inferring a feedback controller that is guaranteed to stabilize the system.
The dimension $\neigr$ of the space induced by the unstable dynamics is
typically orders of magnitude lower than the dimension of the whole state space
in which all dynamics evolve and which often determines the high data and
training costs of traditional data-driven control methods.
Context-aware controller inference thus opens the door to stabilizing
systems from scarce data, such as near rare events and when data generation is
expensive, even if systems describe complex physics.


\section{Preliminaries}%
\label{sec:nonlinear}


\subsection{Stabilizing dynamical processes with feedback control}%
\label{subsec:stabfeedback}

Consider a system that gives rise to a process $(\xn(t))_{t \geq 0}$ that is
controlled by inputs $(\un(t))_{t \geq 0}$, where $\xn(t) \in \R^{\nh}$ and
$\un(t) \in \R^{\np}$ are the state and input at time $t$, respectively.
The feasible initial conditions $\xn(0) \in \Xcaln$ are in the subspace
$\Xcaln \subset \R^{\nh}$.
If the process is continuous in time, then the dynamics are governed by ordinary
differential equations
\begin{equation} \label{eqn:ctnsys}
  \begin{aligned}
    \dxn(t) & = f\big(\xn(t), \un(t)\big), & t \geq 0,
  \end{aligned}
\end{equation}
with the potentially nonlinear right-hand side function
$f\colon \R^{\nh} \times \R^{\np} \rightarrow \R^{\nh}$.  
In discrete time, the process is governed by the difference equations 
\begin{equation} \label{eqn:dtnsys}
  \begin{aligned}
    \xn(t + 1) & = f\big(\xn(t), \un(t)\big), & t \in \N_{0} := \N \cup \{0\}.
  \end{aligned}
\end{equation}
Let $(\xb, \ub) \in \R^{\nh} \times \R^{\np}$ be a controlled, constant-in-time
steady state---equilibrium point---such that, in the continuous-time case,
\begin{equation} \label{eqn:steadystatect}
  f(\xb, \ub) = 0,
\end{equation}
and in the discrete-time case
\begin{equation} \label{eqn:steadystatedt}
  f(\xb, \ub) = \xb
\end{equation}
hold.
Conditions~\cref{eqn:steadystatect,eqn:steadystatedt}
imply that the steady state is constant over time.
In the following, we assume that the nonlinear function $f$
in~\cref{eqn:ctnsys,eqn:dtnsys} is analytic at $(\xb, \ub)$.

A steady state $(\xb, \ub)$ is unstable if, for the fixed input signal $\ub$,
trajectories diverge away from $\xb$ for initial conditions $\xn(0)$ in a
neighborhood of the steady state $\xb$.
A linear state-feedback controller $K \in \R^{\np \times \nh}$ asymptotically
stabilizes trajectories in sufficiently small neighborhoods about $\xb$ if, for
a given $\xn(0) \in \Xcal_{0}$ in a neighborhood of $\xb$, the input
trajectory $(\un(t))_{t \geq 0}$ with
$\un(t) = K(\xn(t) - \xb) + \ub$ leads to $\xn(t) \to \xb$
for $t \to \infty$, where $(\xn(t))_{t \geq 0}$ is given by
either~\cref{eqn:ctnsys} or~\cref{eqn:dtnsys}; see~\cite{NijV16}.


\subsection{Problem formulation}%
\label{subsec:problem}

In this work, a model of the system in form of the right-hand side function $f$
in~\cref{eqn:ctnsys} (or~\cref{eqn:dtnsys}) is unavailable, which means the
function $f$ cannot be evaluated directly and is not given in closed form.
Instead, we either can generate data triplets $\datatripn$ by querying
the system at feasible initial conditions and inputs in a black-box fashion or
have given $\datatripn$.
In the continuous-time case, the data triplets are
\begin{align*}
  \Un_{-} & = \begin{bmatrix} \un(t_{0})  & \ldots &
    \un(t_{\dT - 1}) \end{bmatrix} \in \R^{\np \times \dT}, \quad  
  \Xn_{-} = \begin{bmatrix} \xn(t_{0}) &  \ldots &
    \xn(t_{\dT - 1}) \end{bmatrix} \in \R^{\nh \times \dT}, \\
  \Xn_{+} & = \begin{bmatrix} \dxn(t_{0}) &  \ldots &
    \dxn(t_{\dT - 1}) \end{bmatrix} \in \R^{\nh \times \dT},
\end{align*}
at discrete time points $0 \leq t_{0} < t_{1} < \ldots < t_{\dT - 1}$.
In the discrete-time case, the data triplets are 
\begin{align*}
  \Un_{-} & = \begin{bmatrix} \un(0)  & \ldots & \un(\dT - 1)
    \end{bmatrix} \in \R^{\np \times \dT},\quad
  \Xn_{-} = \begin{bmatrix} \xn(0)  & \ldots & \xn(T - 1)
    \end{bmatrix} \in \R^{\nh \times \dT}, \\
  \Xn_{+} & = \begin{bmatrix} \xn(1) & \ldots & \xn(T)
    \end{bmatrix} \in \R^{\nh \times \dT}.
\end{align*}
A data triplet $\datatripn$ can also contain the concatenation of several
trajectories, which is not reflected in the notation for ease of exposition.
We seek to construct controllers $K$ from data triplets
$\datatripn$ to stabilize systems about steady states of interest $(\xb, \ub)$.

The traditional approach to data-driven control is first learning a model of the
system dynamics and then applying standard techniques from control.
However, this traditional two-step learn-then-stabilize approach can be
expensive in terms of the number of required state observations, which typically
scales with the dimension of the
system~\cite{DePT20, VanETetal20, WerP22,VanD96}.
In particular, if observing states is expensive and the underlying dynamics
describe complex physical phenomena, then often infeasibly high numbers of state
observations are required to learn models of the system dynamics, which makes
traditional learn-then-stabilize approaches intractable.


\section{Inferring controllers from data of unstable dynamics}%
\label{sec:inferpartstab}

We introduce context-aware controller inference that guarantees learning
stabilizing controllers with data requirements that scale as the
dimension of the subspace spanned by the unstable dynamics, which is
typically orders of magnitude lower than the dimension of the whole state space
in which all dynamics evolve.
To achieve this, we exploit that controllers need to act only on the unstable
part of the system dynamics for stabilization.


\subsection{Stabilizing nonlinear systems near steady states}%
\label{subsec:stabnl}

Let $(\xb, \ub)$ be a steady state.
The right-hand side function $f$ of~\cref{eqn:ctnsys} (or~\cref{eqn:dtnsys}) can
be approximated at $(\xn(t), \un(t))$ with the first two terms of its
Taylor series expansion about $(\xb, \ub)$:
\begin{equation} \label{eqn:taylor}
  \begin{aligned}
    f\big(\xn(t), \un(t)\big) = f(\xb, \ub) +
      \partial_{\rm{x}} f(\xb, \ub)
      \big(\xn(t) - \xb\big) + 
      \partial_{\rm{u}} f(\xb, \ub) \big(\un(t) - \ub\big)\\
    {}+{}
      \mathcal{O}\left(\big(\xn(t) - \xb\big)^{2}\right) +
      \mathcal{O}\left(\big(\un(t) - \ub\big)^{2}\right),
  \end{aligned}
\end{equation}
where $\partial_{\rm{x}} f(\xb, \ub)$ and $\partial_{\rm{u}} f(\xb, \ub)$
denote the parts of the Jacobian at $(\xb, \ub)$ of $f$ with respect to $\xn(t)$
and $\un(t)$, respectively.
The first term $f(\xb, \ub)$ in~\cref{eqn:taylor} is either $f(\xb, \ub) = 0$
or $f(\xb, \ub) = \xb$; cf.~\Cref{eqn:steadystatect,eqn:steadystatedt}.
With the system matrices $A = \partial_{\rm{x}} f(\xb, \ub) \in
\R^{\nh \times \nh}$ and $B = \partial_{\rm{u}} f(\xb, \ub) \in
\R^{\nh \times \np}$, a model of the linearized system
of~\cref{eqn:ctnsys} about the steady state is 
\begin{equation} \label{eqn:ctsys}
  \begin{aligned}
    \dx(t) & = A x(t) + B u(t), & t \geq 0,
  \end{aligned}
\end{equation}
and analogously for the discrete-time case~\cref{eqn:dtnsys},
\begin{equation} \label{eqn:dtsys}
  \begin{aligned}
    x(t + 1) & = A x(t) + B u(t), & t \in \N_{0},
  \end{aligned}
\end{equation}
where $x(t)$ approximates the shifted nonlinear state $\xn(t) - \bar{x}$ at time
$t$ and the input $u(t)$ corresponds to $\un(t) - \bar{u}$.

The system described by the model~\cref{eqn:ctsys} (or~\cref{eqn:dtsys}) is
called asymptotically stabilizable if there exists a constant matrix
$K \in \R^{\np \times \nh}$ such that the application of the state feedback
$u(t) = K x(t)$ stabilizes the system, i.e.,
$\lVert x(t) \rVert \rightarrow 0$ for $t \rightarrow \infty$.
A linear model such as~\cref{eqn:ctsys} is asymptotically stable if all
eigenvalues of $A$ have negative real parts; in case of discrete-time linear
models~\cref{eqn:dtsys}, the eigenvalues of $A$ have to be inside the unit disc.
A linear state-space model is called stabilizable if there exists a $K$ such
that $A + B K$ has only stable eigenvalues.

The following proposition states that a controller $K$ obtained from
the linearized system is guaranteed to stabilize the nonlinear system if the
state $\xn(t)$ stays in the neighborhood of the steady state $\xb$.

\begin{proposition}[Locally stabilizing feedback controller;
  {cf.~\cite[Sec.~10.1]{NijV16}}]%
  \label{prp:stabnl}
  Consider a linearized system described
  by~\cref{eqn:ctsys} (or~\cref{eqn:dtsys}) 
  of a corresponding nonlinear system described by~\cref{eqn:ctnsys}
  (or~\cref{eqn:dtnsys}) at the steady state of interest $(\xb, \ub)$, where the
  nonlinear function $f$ is analytic.
  If $K$ is a controller that stabilizes the linearized system asymptotically,
  then $\un(t)  = K (\xn(t) - \xb) + \ub$ locally stabilizes the nonlinear
  system about $(\xb, \ub)$ in the asymptotic sense, i.e.,
  there exists an $\epsilon > 0$ such that for all $\xn(0) \in \Xcaln(0)$ with 
  $\lVert \xb - \xn(0) \rVert < \epsilon$ we have that
  $\xn(t) \rightarrow \xb$ for $t \rightarrow \infty$.
\end{proposition}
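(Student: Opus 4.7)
The plan is to invoke the classical linearization principle (Lyapunov's indirect method), which reduces the claim to constructing a quadratic Lyapunov function for the linearized closed-loop system and controlling the higher-order Taylor remainder in a sufficiently small neighborhood of the steady state. Throughout I will work with the shifted coordinates $x(t) = \xn(t) - \xb$ and $u(t) = \un(t) - \ub$, so that the steady state moves to the origin and the proposed feedback becomes the linear law $u(t) = K x(t)$.

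First I would substitute the feedback into the nonlinear dynamics. Using analyticity of $f$ at $(\xb, \ub)$ together with the Taylor expansion~\cref{eqn:taylor} and the steady-state conditions~\cref{eqn:steadystatect} (respectively~\cref{eqn:steadystatedt}), the closed-loop nonlinear system can be written as
\begin{equation*}
  \dx(t) = (A + BK) x(t) + g\big(x(t)\big),
\end{equation*}
with a remainder satisfying $\lVert g(x) \rVert \leq c \lVert x \rVert^{2}$ on some neighborhood of the origin, where $c > 0$ is a constant coming from the bound on the quadratic and higher-order Taylor terms. The discrete-time case is analogous, writing $x(t+1) = (A+BK) x(t) + g(x(t))$ with the same type of remainder estimate.

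Next I would build a Lyapunov function. Since $K$ stabilizes the linearized system, $A + BK$ is Hurwitz (respectively Schur in discrete time), so for any fixed $Q \succ 0$ the continuous Lyapunov equation $(A + BK)^{\trans} P + P (A + BK) = -Q$ admits a unique $P \succ 0$; the discrete-time version uses $(A+BK)^{\trans} P (A+BK) - P = -Q$. Setting $V(x) = x^{\trans} P x$, a short calculation gives, along closed-loop trajectories,
\begin{equation*}
  \frac{\rm{d}}{\rm{d} t} V\big(x(t)\big) = -x(t)^{\trans} Q x(t) + 2\, x(t)^{\trans} P\, g\big(x(t)\big),
\end{equation*}
and the cross term is bounded by $2 c \lVert P \rVert \lVert x(t) \rVert^{3}$. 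Choosing any $\epsilon > 0$ small enough so that $2 c \lVert P \rVert \epsilon < \lambda_{\min}(Q)$ guarantees that $\frac{\rm{d}}{\rm{d} t} V(x(t)) < 0$ whenever $0 < \lVert x(t) \rVert < \epsilon$. A sublevel set of $V$ contained in this ball is forward invariant, so trajectories starting inside it remain inside and $V(x(t)) \to 0$, hence $x(t) \to 0$, equivalently $\xn(t) \to \xb$. The discrete case is analogous, estimating $V(x(t+1)) - V(x(t))$ and absorbing the mixed and quadratic-remainder contributions into $-\tfrac{1}{2} x^{\trans} Q x$ for $\lVert x \rVert$ small.

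The main obstacle is choosing $\epsilon$ uniformly and establishing forward invariance, since the remainder bound on $g$ only holds locally: I would therefore first fix a ball $B_{r}$ on which the quadratic Taylor bound is valid with a fixed constant $c$, then shrink $\epsilon$ further so that the chosen sublevel set of $V$ is contained in $B_{r}$ and so that the Lyapunov decrement is strictly negative on it. Everything else is standard manipulation, and the continuous- and discrete-time arguments run in parallel with only the Lyapunov identity changing.
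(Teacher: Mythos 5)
The paper does not actually prove \cref{prp:stabnl}; it imports the result from the cited reference (Sec.~10.1 of Nijmeijer/van der Schaft) and builds on it as a known fact. Your argument is the standard proof of that classical result---Lyapunov's indirect method: write the closed loop as $(A+BK)x + g(x)$ with $\lVert g(x)\rVert \le c\lVert x\rVert^{2}$ via the Taylor expansion~\cref{eqn:taylor}, solve the (continuous or discrete) Lyapunov equation for $P \succ 0$, and dominate the cubic cross term by the quadratic decrease on a small enough sublevel set---and it is correct in both the continuous- and discrete-time cases. The only step worth making fully explicit is the final convergence claim: strict decrease of $V$ alone does not give $V \to 0$, but your quantitative bound already yields $\tfrac{\rm{d}}{\rm{d}t}V \le -\bigl(\lambda_{\min}(Q) - 2c\lVert P\rVert\epsilon\bigr)\lVert x\rVert^{2} \le -\alpha V$ with $\alpha > 0$ on the invariant sublevel set, so exponential convergence follows and the gap is only presentational.
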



\subsection{Stabilizing models based on unstable eigenvalues}%
\label{subsec:partstab}

To restrict the action of a controller $K$ to the unstable
eigenvalues of the spectrum of linear models, we build on the theory of
pole placement~\cite{Var81, Saa88} and partial
stabilization~\cite{BenCQ01, BenCQetal00, HeM94, Sim96}.
This leads to a reduction of the dimension of the spaces in which the dynamics
evolve, from a typically high-dimensional state space to subspaces spanned by
the unstable dynamics that are often orders of magnitude lower in dimension.
In this section, we generalize concepts of partial stabilization to be
applicable to the data-driven setting.

In the following, when we state that two matrices have the same spectrum, it
implies that they have the same eigenvalues with the same geometric and
algebraic multiplicities.

\begin{lemma}%
  \label{thm:partstableft}
  Consider the linear model $(A, B)$ given in~\cref{eqn:ctsys}
  (or~\cref{eqn:dtsys}) and assume it is stabilizable.
  Let $W \in \R^{\nh \times \neig}$ be a basis matrix of the
  $\neig$-dimensional left eigenspace of $A$ corresponding to the unstable
  eigenvalues and define the matrices
  \begin{equation} \label{eqn:partstableftTmp0}
    \begin{aligned}
      \Ar & =  W^{\trans} A \left( W^{\dagger} \right)^{\trans}, &
      \Br & =  W^{\trans} B,
    \end{aligned}
  \end{equation}
  where $W^{\dagger}$ is a left inverse of $W$.
  Let further $\Kr$ stabilize $(\Ar, \Br)$.
  Then, it holds that
  \begin{equation*}
    K  = \Kr W^{\trans}
  \end{equation*}
  stabilizes the model $(A, B)$. 
  Furthermore, it holds that the spectrum of $\Ar$ is the spectrum of $A$
  corresponding to the unstable eigenvalues and that the union of the stable
  spectrum of $A$ and the spectrum of $\Ar + \Br \Kr$ is the spectrum of
  $A + B K$.
\end{lemma}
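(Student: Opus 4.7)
The plan is to exploit the spectral decomposition of $A$ into its unstable and stable invariant parts and then translate the closed-loop behavior through this decomposition. I would first introduce basis matrices $V \in \R^{\nh \times \neig}$ and $V_{\rm{s}} \in \R^{\nh \times (\nh - \neig)}$ for the right unstable and right stable $A$-invariant subspaces, so that $A V = V \Lambda_{\rm{u}}$ and $A V_{\rm{s}} = V_{\rm{s}} \Lambda_{\rm{s}}$ with $\sigma(\Lambda_{\rm{u}})$ the unstable and $\sigma(\Lambda_{\rm{s}})$ the stable part of $\sigma(A)$. Standard Riesz spectral projector theory then yields that $T := [V, V_{\rm{s}}]$ is invertible, that $W^\trans V_{\rm{s}} = 0$ because the left-unstable and right-stable invariant subspaces are biorthogonal, and that $S := W^\trans V \in \R^{\neig \times \neig}$ is invertible.

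Next I would establish the spectrum of $\Ar$. Because $\mspan(W)$ is $A^\trans$-invariant, there exists $M \in \R^{\neig \times \neig}$ with $W^\trans A = M W^\trans$ and $\sigma(M)$ equal to the unstable spectrum of $A$. Right-multiplying by $(W^\dagger)^\trans$ and using $W^\trans (W^\dagger)^\trans = (W^\dagger W)^\trans = I$ yields $\Ar = M$. Evaluating $W^\trans A V$ through both identities gives $S \Lambda_{\rm{u}} = \Ar S$, so $\Ar$ is similar to $\Lambda_{\rm{u}}$ via $S$ and hence has the unstable spectrum of $A$ with correct multiplicities. Then, with $K := \Kr W^\trans$, I would compute $T^{-1}(A + B K) T$ block by block. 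The key observation is $W^\trans T = [S,\, 0]$, so $B K T = B \Kr [S,\, 0]$ has zero second block column. Writing $T^{-1} B = [B_1^\trans,\, B_2^\trans]^\trans$ and using $W^\trans = S R$ with $R$ the top $\neig$ rows of $T^{-1}$ gives $B_1 = S^{-1} \Br$. Assembling everything produces the block lower triangular form
\begin{equation*}
  T^{-1}(A + B K) T = \begin{bmatrix} \Lambda_{\rm{u}} + S^{-1} \Br \Kr S & 0 \\ B_2 \Kr S & \Lambda_{\rm{s}} \end{bmatrix},
\end{equation*}
whose top-left block is similar via $S$ to $\Ar + \Br \Kr$. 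The characteristic polynomial of $A + B K$ therefore factors into that of $\Ar + \Br \Kr$ times that of $\Lambda_{\rm{s}}$, which yields the second spectral claim; since $\Kr$ stabilizes $(\Ar, \Br)$ and $\Lambda_{\rm{s}}$ is stable, $K$ stabilizes $(A, B)$.

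The main obstacle I anticipate is rigorously establishing the spectral-decomposition prerequisites—that $T$ is invertible, $W^\trans V_{\rm{s}} = 0$, and $W^\trans V$ is invertible—from the mere hypothesis that $W$ is a basis of the left unstable invariant subspace, with no preferred biorthogonality to any right basis. I would resolve this via the Riesz spectral projector onto the unstable part of $\sigma(A)$, whose range equals $\mspan(V)$ and kernel equals $\mspan(V_{\rm{s}})$, together with the observation that the transpose of this projector has range $\mspan(W)$. For real systems with complex eigenvalues, all computations go through verbatim on the real invariant subspaces corresponding to conjugate eigenvalue pairs.
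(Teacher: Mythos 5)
Your proof is correct and follows essentially the same route as the paper's: both reduce $A + BK$ via a spectrum-separating similarity transformation to a block lower triangular form whose diagonal blocks are (similar to) $\Ar + \Br \Kr$ and the stable part of $A$, with $K = \Kr W^{\trans}$ appearing as the zero-padded controller $\begin{bmatrix} \Kr & 0 \end{bmatrix}$ in the transformed coordinates. The only difference is bookkeeping: the paper conjugates by $\begin{bmatrix} W & W_{\perp} \end{bmatrix}^{\trans}$ with $W_{\perp}$ the orthogonal complement of $\mspan(W)$ and cites Golub--Van Loan for the resulting triangular structure, whereas you use the right spectral decomposition $\begin{bmatrix} V & V_{\rm{s}} \end{bmatrix}$ and justify the required biorthogonality ($W^{\trans} V_{\rm{s}} = 0$ and $W^{\trans} V$ invertible) via Riesz projectors---and by that same biorthogonality $\mspan(W_{\perp}) = \mspan(V_{\rm{s}})$, so the two transformations separate the same pair of subspaces.
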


\Cref{thm:partstableft} implies that the unstable dynamics of the model $(A, B)$
can be fully described by the space spanned by the columns of the basis
matrix $W$, which contains as columns the left eigenvectors of the unstable
eigenvalues of $A$.
The eigenvalues of $A$ can be complex such that also the corresponding
eigenvectors are complex-valued.
However, with the assumption that $A$ is real-valued, the eigenvalues and
eigenvectors appear in complex conjugate pairs such that there exists a
real basis matrix $W$ of the corresponding complex eigenspace;
see, e.g.,~\cite[Sec.~7.4.1]{GolV13}.
The last statement of \Cref{thm:partstableft} implies that only the unstable
eigenvalues of $A$ are changed in the closed-loop model by the application
of $K$.
In particular, the eigenvalues are determined via the spectrum of the
reduced closed-loop model $\Ar + \Br \Kr$ by the construction of $\Kr$.

\begin{proof}[Proof of \Cref{thm:partstableft}.]
  A basis matrix $\Wt \in \C^{\nh \times \neig}$ of the left eigenspace of $A$
  corresponding to the unstable eigenvalues is defined by
  \begin{equation} \label{eqn:partstableftTmp1}
    A^{\trans} \Wt = \Wt \Lambda^{\trans},
  \end{equation}
  where $\Lambda \in \C^{\neig \times \neig}$ consists of Jordan blocks with
  the unstable eigenvalues of $A$ on its diagonal.
  There exists a transformation $S \in \C^{\neig \times \neig}$ with
  $\Lambda = S^{-1} \Ar S$, where $\Ar$ is
  from~\cref{eqn:partstableftTmp0}, such that $\Ar$ has by construction the same
  spectrum as $\Lambda$.
  Inserting the transformation into~\cref{eqn:partstableftTmp1} yields
  \begin{equation*}
    \begin{aligned}
      A^{\trans} \Wt & = \Wt S^{\trans} \Ar^{\trans} S^{-\trans} &
      & \Leftrightarrow &
      A^{\trans} \Wt S^{\trans} & = \Wt S^{\trans} \Ar^{\trans}.
    \end{aligned}
  \end{equation*}
  Setting $W = \Wt S^{\trans}$ and transposing the eigenvalue relation
  we see that
  \begin{equation} \label{eqn:partstableftTmp4}
    \Ar = W^{\trans} A \left( W^{\dagger} \right)^{\trans}
  \end{equation}
  holds for any left inverse $W^{\dagger}$ of $W$.
  Let $W_{\perp}$ be a basis matrix of the orthogonal complement to the
  subspace spanned by the columns of $W$, i.e., it holds that
  \begin{equation*}
    \mspan\left(\begin{bmatrix} W & W_{\perp} \end{bmatrix}\right)
    = \R^{\nh},
  \end{equation*}
  with $W_{\perp}^{\trans} W = 0$ and $W^{\trans} W_{\perp} = 0$.
  Then, it holds that
  \begin{equation} \label{eqn:partstableftTmp2}
    \begin{aligned}
      \begin{bmatrix} W & W_{\perp} \end{bmatrix}^{\trans} A 
        \begin{bmatrix} W & W_{\perp} \end{bmatrix}^{-\trans} & =
      \begin{bmatrix} \Ar & 0 \\ A_{21} & A_{22} \end{bmatrix}, &
        \begin{bmatrix} W & W_{\perp} \end{bmatrix}^{\trans} B & = 
        \begin{bmatrix} \Br \\ B_{2} \end{bmatrix},
    \end{aligned}
  \end{equation}
  where $\Br$ is as in~\cref{eqn:partstableftTmp0} and the
  spectrum of $A_{22}$ is the stable spectrum of $A$;
  see, e.g.,~\cite[Thm.~7.1.6]{GolV13}.
  We can now construct a controller $\Kr$ such that $\Ar + \Br \Kr$ is
  stable since  $(A, B)$ has been assumed to be
  stabilizable.
  Augmenting the controller $\Kr$ by zeros yields the closed-loop
  matrix of~\cref{eqn:partstableftTmp2} as
  \begin{equation} \label{eqn:partstableftTmp3}
    \begin{bmatrix} \Ar & 0 \\ A_{21} & A_{22} \end{bmatrix} + 
      \begin{bmatrix} \Br \\ B_{2} \end{bmatrix}
      \begin{bmatrix} \Kr & 0\end{bmatrix} = 
      \begin{bmatrix} \Ar + \Br \Kr & 0 \\ A_{21} + B_{2} \Kr & A_{22}
      \end{bmatrix}.
  \end{equation}
  The closed-loop matrix in~\cref{eqn:partstableftTmp3} is stable since
  $\Kr$ has been chosen such that $\Ar + \Br \Kr$ is stable and $A_{22}$ has
  only stable eigenvalues.
  Inverting the transformation in~\cref{eqn:partstableftTmp2} and applying that
  to~\cref{eqn:partstableftTmp3} gives the closed-loop matrix of the original
  state-space model with
  \begin{equation*}
    A + B K = \begin{bmatrix} W & W_{\perp} \end{bmatrix}^{-\trans}
      \left( \begin{bmatrix} \Ar & 0 \\ A_{21} & A_{22} \end{bmatrix}
      + \begin{bmatrix} \Br \\ B_{2} \end{bmatrix}
      \begin{bmatrix} \Kr & 0\end{bmatrix} \right)
      \begin{bmatrix} W & W_{\perp} \end{bmatrix}^{\trans},
  \end{equation*}
  which reveals by multiplying out the block matrices that
  \begin{equation*}
    K = \begin{bmatrix} \Kr & 0\end{bmatrix}
      \begin{bmatrix} W & W_{\perp} \end{bmatrix}^{\trans}
      = \Kr W^{\trans}
  \end{equation*}
  holds.
  Since transformations do not change the spectrum of matrices, the spectrum of
  $A + B K$ is the spectrum of~\cref{eqn:partstableftTmp3}, which concludes
  the proof.
\end{proof}

In \Cref{thm:partstableft}, the left eigenvector basis is essential
for the proof.
It leads to the lower triangular form~\cref{eqn:partstableftTmp2} that has the
required ordering of the diagonal blocks with the unstable eigenvalues contained
in the upper and the stable eigenvalues in the lower diagonal block.
This particular ordering is necessary since otherwise the spectrum is disturbed
by the action of the controller; cf.~\cite[Sec.~3.2.2]{WerP22}.
However, the left eigenspace of the unstable eigenvalues can also be constructed
as orthogonal complement of the right eigenspace of the stable
eigenvalues, as the next lemma shows.

\begin{lemma}%
  \label{lmm:orthcomp}
  Given a matrix $A$, let $W \in \R^{\nh \times \neig}$ be a basis matrix of the
  left eigenspace corresponding to the unstable eigenvalues of $A$,
  and let $Q_{\perp} \in \R^{\nh \times \neig}$ be a basis matrix of the
  orthogonal complement to the right eigenspace of $A$ corresponding to the
  stable eigenvalues.
  Then, it holds that
  \begin{equation} \label{eqn:orthcompsub}
    \mspan(W) = \mspan\left( \left( Q_{\perp}^{\dagger}
      \right)^{\trans} \right),
  \end{equation}
  for all left inverses $Q_{\perp}^{\dagger}$ of $Q_{\perp}$.
  In the case $Q_{\perp}$ is an orthogonal basis, \Cref{eqn:orthcompsub}
  simplifies to
  \begin{equation*}
    \mspan(W) = \mspan(Q_{\perp}).
  \end{equation*}
\end{lemma}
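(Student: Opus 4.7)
The plan is to reduce the statement to the classical biorthogonality between the left invariant subspace of $A$ for the unstable spectrum and the right invariant subspace of $A$ for the stable spectrum. Once this biorthogonality is established, the equality $\mspan(W) = \mspan(Q_{\perp})$ follows from a dimension count, and the general version involving a left inverse is then a short algebraic observation.

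First I would invoke the spectral decomposition of $A$: since the stable and unstable eigenvalues of $A$ form two disjoint groups, there is a direct sum decomposition $\R^{\nh} = V_{\rm{s}} \oplus V_{\rm{u}}$ into $A$-invariant subspaces, where $V_{\rm{s}}$ is the generalized right eigenspace for the stable eigenvalues and $V_{\rm{u}}$ is the one for the unstable eigenvalues. By construction $\mspan(Q) = V_{\rm{s}}$. In a basis adapted to this direct sum, $A$, and hence also $A^{\trans}$, is block diagonal. The unstable left invariant subspace of $A$---by definition $\mspan(W)$---therefore corresponds in the dual basis to the annihilator of $V_{\rm{s}}$; identifying $(\R^{\nh})^{*}$ with $\R^{\nh}$ via the standard inner product, this annihilator is precisely $V_{\rm{s}}^{\perp}$. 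Since $A$ is real, complex left generalized eigenvectors for the unstable spectrum appear in conjugate pairs and combine into the real basis matrix $W$ exactly as noted after \Cref{thm:partstableft}, so the identification $\mspan(W) = V_{\rm{s}}^{\perp}$ is valid over the reals. A dimension check, $\dim \mspan(W) = \neig = \nh - (\nh - \neig) = \dim V_{\rm{s}}^{\perp}$, is consistent. Since $\mspan(Q_{\perp}) = V_{\rm{s}}^{\perp}$ by the definition of $Q_{\perp}$, this already settles the orthogonal case.

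For the general form $\mspan(W) = \mspan((Q_{\perp}^{\dagger})^{\trans})$ I would use the Moore--Penrose representation $Q_{\perp}^{\dagger} = (Q_{\perp}^{\trans} Q_{\perp})^{-1} Q_{\perp}^{\trans}$ compatible with the paper's conventions, so that $(Q_{\perp}^{\dagger})^{\trans} = Q_{\perp} (Q_{\perp}^{\trans} Q_{\perp})^{-1}$ has column span equal to $\mspan(Q_{\perp})$, and the previous step finishes the argument. The main obstacle in the plan is the biorthogonality step itself, because it must also cover the presence of nontrivial Jordan blocks in $A$. A uniform way to handle this is to argue by contradiction from the block-diagonal form: any vector in the left invariant subspace of $A^{\trans}$ for the unstable spectrum that had a nonzero component on $V_{\rm{s}}$ would, after projection onto the dual of $V_{\rm{s}}$, give a nontrivial generalized eigenvector of $(A|_{V_{\rm{s}}})^{\trans}$ for an unstable eigenvalue, contradicting the fact that $A|_{V_{\rm{s}}}$ has only stable eigenvalues. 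Everything else in the proof is routine bookkeeping.
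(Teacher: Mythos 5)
Your proposal is correct, and it takes a recognizably different route from the paper. The paper's proof concatenates $Q_{\perp}$ with a basis $Q$ of the stable right eigenspace, uses the resulting block lower-triangular reduction to obtain $Q_{\perp}^{\dagger} A Q_{\perp} = \Ar$ with the unstable spectrum, and then concludes by ``comparing'' this with the analogous relation $\Ar = W^{\trans} A (W^{\dagger})^{\trans}$ from the proof of \Cref{thm:partstableft}; the final step, i.e.\ why equality of the two reduced matrices forces equality of the two spans, is left implicit and really rests on the uniqueness of the invariant subspace of $A^{\trans}$ associated with the unstable eigenvalues. You make exactly that hidden ingredient explicit: the biorthogonality $\mspan(W) = V_{\mathrm{s}}^{\perp}$ between the left unstable invariant subspace and the right stable invariant subspace, derived from the dual-basis (annihilator) description of the block-diagonal spectral decomposition together with a dimension count, after which $\mspan(Q_{\perp}) = V_{\mathrm{s}}^{\perp}$ is immediate from the definition of $Q_{\perp}$. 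Your version is more self-contained, handles Jordan structure uniformly, and does not need the second similarity reduction; what the paper's route buys is that it reuses the machinery already set up for \Cref{thm:partstableft} and directly produces the identity $Q_{\perp}^{\dagger} A Q_{\perp} = \Ar$ that is used downstream.

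One caveat, which applies equally to the paper's own proof: the lemma asserts \cref{eqn:orthcompsub} for \emph{all} left inverses $Q_{\perp}^{\dagger}$, but your argument only covers the Moore--Penrose choice $Q_{\perp}^{\dagger} = (Q_{\perp}^{\trans} Q_{\perp})^{-1} Q_{\perp}^{\trans}$. A general left inverse has the form $Q_{\perp}^{+} + N (I - Q_{\perp} Q_{\perp}^{+})$ with arbitrary $N$, and its transpose has columns with components in $\mspan(Q_{\perp})^{\perp}$, so its column span need not equal $\mspan(Q_{\perp})$; already for $Q_{\perp} = e_{1} \in \R^{2}$ the left inverse $\begin{bmatrix} 1 & a \end{bmatrix}$ with $a \neq 0$ gives a different span. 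The quantifier should therefore be read as referring to the pseudoinverse (equivalently, the left inverse whose kernel is $\mspan(Q_{\perp})^{\perp}$); with that reading your proof is complete.
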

\begin{proof}
  Let $Q$ be a basis matrix of the eigenspace of $A$ associated with the stable
  eigenvalues.
  By basis concatenation with $Q_{\perp}$, it holds
  \begin{equation} \label{eqn:orthcompTmp1}
    \begin{bmatrix} Q_{\perp} & Q \end{bmatrix}^{-1} A
      \begin{bmatrix} Q_{\perp} & Q \end{bmatrix} =
      \begin{bmatrix} \Ar & 0 \\ A_{21} & A_{22} \end{bmatrix},
  \end{equation}
  where the spectrum of $\Ar$ is the unstable spectrum of $A$ and the
  spectrum of $A_{22}$ is the stable one of $A$;
  see~\cite[Thm.~7.1.6]{GolV13}.
  In particular, we get from~\cref{eqn:orthcompTmp1} that
  \begin{equation} \label{eqn:orthcompTmp2}
    Q_{\perp}^{\dagger} A Q_{\perp} = \Ar,
  \end{equation}
  for all left inverses $Q_{\perp}^{\dagger}$ of $Q_{\perp}$.
  The basis matrix $W$ can be chosen such that the matrices $\Ar$ and $A_{22}$
  in~\cref{eqn:partstableftTmp2,eqn:orthcompTmp1} are identical, since both
  spectra are identical there must be an appropriate similarity transformation.
  Comparing the two basis matrices
  in~\cref{eqn:orthcompTmp2,eqn:partstableftTmp4} yields the result.
\end{proof}

Note that there is a line of work on constructing manifolds of unstable
dynamics, instead of subspaces as in our approach. Working with manifolds, instead of spaces, can potentially be beneficial when
systems are strongly nonlinear.
The corresponding computational methods
involving manifolds can become computationally challenging, especially for
systems in high-dimensional state spaces~\cite{KraODetal05, ZieDG19}.
It remains future work to efficiently combine our control approach with methods
for constructing unstable manifolds.


\subsection{Controller inference from idealized data}%
\label{subsec:partstabdata}

In this section, we will construct stabilizing controllers from given
data triplets.
For now, we consider data triplets $\datatrip$ that are idealized in the sense
that they are obtained from models of the linearized systems.
However, in the next section and in the numerical experiments we will consider
data from the original nonlinear systems.
In the continuous-time case, the idealized data triplets are
\begin{equation*}
  \begin{aligned}
    U_{-} & = \begin{bmatrix} u(t_{0}) & \dots & u(t_{\dT - 1})
      \end{bmatrix}, &
    X_{-} & = \begin{bmatrix} x(t_{0}) & \dots &  x(t_{\dT - 1})
      \end{bmatrix}, \\
    X_{+} & = \begin{bmatrix} \dx(t_{0}) & \dots & \dx(t_{\dT - 1})
      \end{bmatrix},
  \end{aligned}
\end{equation*}
at discrete time points $0 \leq t_{0} < t_{1} < \ldots < t_{\dT - 1}$ with
states from~\cref{eqn:ctsys}, and in the discrete-time case, the idealized data
triplets with states from~\cref{eqn:dtsys} are
\begin{equation*}
  \begin{aligned}
    U_{-}  & = \begin{bmatrix} u(0) & \dots & u(T - 1) \end{bmatrix}, &
    X_{-}  & = \begin{bmatrix} x(0) & \dots & x(T-1) \end{bmatrix},\\
    X_{+}  & = \begin{bmatrix} x(1) & \dots & x(T) \end{bmatrix}.
  \end{aligned}
\end{equation*}

The fundamental principle of data informativity~\cite{VanETetal20} is that
potentially many linear models can describe a given data triplet $\datatrip$
and that a stabilizing controller needs to stabilize all those linear models to
guarantee the stabilization of the model from which the data has been
generated.
The set of all linear models that explain $\datatrip$ is
\begin{equation*}
  \Sigma_{\rm{i/s}} = \left\{ (A, B) \left|~ X_{+} = A X_{-} + B U_{-}
    \right.\right\},
\end{equation*}
and all linear models that are stabilized by a given $K$ are
\begin{equation*}
  \Sigma_{K} = \left\{ (A, B) \left|~ A + B K~\text{is stable}
    \right. \right\}.
\end{equation*}
A given data triplet $\datatrip$ is called \emph{informative for
stabilization} if there exists a controller $K$ that stabilizes all explaining
linear models, which means in terms of the two sets
$\Sigma_{\rm{i / s}}$ and $\Sigma_{\rm{K}}$ that $\Sigma_{\rm{i / s}} \subseteq
\Sigma_{\rm{K}}$ holds; see~\cite{VanETetal20}.

The matrix $A$ of the linear model $(A, B)$ can have components
corresponding to unstable eigenvalues that do not contribute to the system
dynamics, which means these are not excited by initial conditions from $\Xcal_0$
or the system inputs.
For any linear model $(A, B)$ with a basis matrix $X_{0}$ of the initial
conditions subspace $\Xcal_{0}$, there exists an invertible matrix
$S \in \R^{\nh \times \nh}$ to transform the system into the Kalman
controllability form:
\begin{equation} \label{eqn:kalman}
  \begin{aligned}
    S^{-1} A S & = \begin{bmatrix} A_{11} & A_{12} & A_{13}\\ 0 & A_{22} &
      A_{23}\\ 0 & 0 & A_{33} \end{bmatrix}, &
    S^{-1} B & = \begin{bmatrix} B_{1} \\ 0 \\ 0 \end{bmatrix},& 
    S^{-1} X_{0} & = \begin{bmatrix} X_{10} \\ X_{20} \\ 0 \end{bmatrix},
  \end{aligned}
\end{equation}
where the zero blocks in the transformed $B$ and $X_{0}$ are chosen as large as
possible; see~\cite{WerP22}.
The first block row in~\cref{eqn:kalman} corresponds to the controllable
dynamics, the second one to those only excited by the initial conditions and the
last block row are zero dynamics, which are neither excited by inputs nor
initial conditions.
Consequently, the eigenspaces corresponding to the unstable eigenvalues in
$A_{33}$ do not result in unstable dynamics since these components are never
excited.
In the following, we consider the left eigenbasis matrix
$W \in \R^{\nh \times \neigr}$ to be minimal in the sense that the subspace
$\Wcal$ spanned by $W$ contains the left eigenspaces corresponding to the
unstable eigenvalues in $A_{11}$ and $A_{22}$ in~\cref{eqn:kalman}.
If $A_{33}$ has unstable eigenvalues, the corresponding eigenbasis is not taken
into account.
In other words, we consider only those eigenspaces that describe unstable
dynamics that can be excited by initial conditions or inputs.

The following theorem characterizes the stabilization of models based on
unstable dynamics using the data informativity concept.

\begin{theorem}%
  \label{thm:partstabdata}
  Consider a potentially nonlinear system given by~\cref{eqn:ctnsys}
  (or~\cref{eqn:dtnsys}).
  Given $\datatrip$ a data triplet sampled from a linearized model obtained
  from the considered nonlinear system about the steady state $(\xb, \ub)$, with
  state-space dimension $\nh$.
  Let further $W \in \R^{\nh \times \neigr}$ be a basis matrix of the minimal
  left eigenspace of the unstable eigenvalues that correspond to the unstable
  system dynamics.
  If the data triplet $\datatripr$ is informative for stabilization, which
  means that $\Sigmar_{\rm{i/s}} \subseteq \Sigmar_{\Kr}$ holds for a suitable
  $\Kr \in \R^{\np \times \neigr}$ and
  \begin{equation} \label{eqn:datatrunc}
    \begin{aligned}
      \Xr_{-} & = W^{\trans} X_{-} & \text{and} &&
        \Xr_{+} & = W^{\trans} X_{+},
    \end{aligned}
  \end{equation}
  then the controller $K = \Kr W^{\trans}$ stabilizes the
  linearized system and is also locally stabilizing the steady state
  $(\xb, \ub)$ of the nonlinear system in the sense
  of \Cref{prp:stabnl}.
\end{theorem}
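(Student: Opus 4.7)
The plan is to reduce the claim to \Cref{thm:partstableft} and \Cref{prp:stabnl} by exhibiting that the underlying linearized system, after left-multiplication by $W^\trans$, is in the reduced informativity set $\Sigmar_{\rm{i/s}}$. Let $(A, B)$ be the linearized model from which the data triplet is sampled, so that $X_{+} = A X_{-} + B U_{-}$ in the continuous-time case (and analogously in discrete time) holds by construction. Set $\Ar = W^\trans A (W^\dagger)^\trans$ and $\Br = W^\trans B$ as in \Cref{thm:partstableft}.

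The key algebraic step is the intertwining identity $W^\trans A = \Ar W^\trans$. Since the columns of $W$ span an $A^\trans$-invariant subspace (the generalized left eigenspace of $A$ associated with the minimal set of unstable eigenvalues), there exists a matrix $M$ with $A^\trans W = W M$; transposing gives $W^\trans A = M^\trans W^\trans$, and the argument around~\cref{eqn:partstableftTmp4} in the proof of \Cref{thm:partstableft} identifies $M^\trans$ with $\Ar$. Multiplying the data equation from the left by $W^\trans$ then gives
\begin{equation*}
  \Xr_{+} = W^{\trans} X_{+} = W^{\trans} A X_{-} + W^{\trans} B U_{-}
  = \Ar \Xr_{-} + \Br U_{-},
\end{equation*}
so $(\Ar, \Br) \in \Sigmar_{\rm{i/s}}$. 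The informativity hypothesis $\Sigmar_{\rm{i/s}} \subseteq \Sigmar_{\Kr}$ immediately yields that $\Ar + \Br \Kr$ is stable.

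To close the argument I invoke \Cref{thm:partstableft} to lift $\Kr$: the controller $K = \Kr W^\trans$ produces a closed-loop matrix whose spectrum consists of the stable spectrum of $A$ together with the (stable) spectrum of $\Ar + \Br \Kr$, so $K$ stabilizes the linearized system along trajectories driven by the data. Local asymptotic stabilization of the steady state $(\xb, \ub)$ in the nonlinear system then follows directly from \Cref{prp:stabnl} applied to the feedback $\un(t) = K(\xn(t) - \xb) + \ub$.

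The main obstacle I anticipate is the \emph{minimality} of $W$: by construction it omits any unstable eigenspaces of $A$ whose modes are neither controllable nor excitable from $\Xcaln$, i.e., those living in the block $A_{33}$ of the Kalman form~\cref{eqn:kalman}. If such modes exist, $(A, B)$ fails to be stabilizable in the literal sense required by \Cref{thm:partstableft}, and the cleanest workaround is to pass to the Kalman-decomposed coordinates, apply \Cref{thm:partstableft} to the stabilizable upper block, and observe that the $A_{33}$-components stay identically zero along trajectories generated from initial conditions in $\Xcaln$ with any input, so that the relevant trajectories still decay under the lifted $K$. Once this bookkeeping is acknowledged, the remainder of the argument is a direct chain through \Cref{thm:partstableft} and \Cref{prp:stabnl}.
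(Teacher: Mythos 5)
Your proposal is correct and follows essentially the same route as the paper's proof: the intertwining relation $W^{\trans} A = \Ar W^{\trans}$ shows that $(\Ar, \Br)$ explains the reduced data, informativity then stabilizes $(\Ar, \Br)$ via $\Kr$, and \Cref{thm:partstableft} together with \Cref{prp:stabnl} lifts the result to the full linearized and nonlinear systems. The only cosmetic difference is the treatment of non-excitable unstable modes in $A_{33}$ of~\cref{eqn:kalman}: the paper replaces those eigenvalues by stable ones (which leaves the data unchanged) so that $(A, B)$ may be assumed stabilizable and \Cref{thm:partstableft} applies literally, whereas you pass to Kalman coordinates and argue the corresponding state components vanish along excitable trajectories---both are the same observation in different bookkeeping.
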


\begin{proof}[Proof of \Cref{thm:partstabdata}.]
  Let $(A, B)$ be the linear model from which the data $\datatrip$ has been
  sampled.
  Because the columns of $W$ span the eigenspace of unstable eigenvalues, it
  holds that
  \begin{equation} \label{eqn:equipartstabTmp1}
    \begin{aligned}
      W^{\trans} A \left( W^{\dagger} \right)^{\trans} & = \Ar, & 
      W^{\trans} B & = \Br & \text{and} &&
      W^{\trans} X_{0} & = \Xr_{0},
    \end{aligned}
  \end{equation}
  where the spectrum of $\Ar$ is the unstable spectrum of $A$ that contributes
  to the system dynamics and $X_{0}$ is a basis of the subspace of initial
  conditions $\Xcal_{0}$.
  For the reduced data in~\cref{eqn:datatrunc} it holds that
  \begin{equation} \label{eqn:equipartstabTmp2}
    \Xr_{+} = W^{\trans} X_{+} = W^{\trans} A X_{-} + W^{\trans} B U_{-}
      = \Ar W^{\trans} X_{-} + \Br U_{-}
      = \Ar \Xr_{-} + \Br U_{-}.
  \end{equation}
  Thus, $(\Ar, \Br)$ is a model that explains the data $\datatripr$.
  Since the nonlinear system is assumed to be stabilizable, this holds
  for the linearized model, too.
  Since $\datatripr$ from~\cref{eqn:datatrunc} is informative for stabilization,
  there is a $\Kr$ such that $\Sigmar_{\rm{i/s}} \subseteq \Sigmar_{\Kr}$.
  Together with~\cref{eqn:equipartstabTmp2}, the feedback $\Kr$ stabilizes also
  the linear model $(\Ar, \Br)$ from~\cref{eqn:equipartstabTmp1}.
  The unstable eigenvalues of the high-dimensional $A$ that do not contribute
  to the system dynamics can be replaced by stable eigenvalues as it does not
  change the dynamics and consequently the given data; see, e.g.,~\cite{WerP22}.
  Therefore, one can assume $(A, B)$ to be stabilizable.
  From \Cref{thm:partstableft} it follows that $K = \Kr W^{\trans}$ stabilizes
  $(A, B)$, which means it stabilizes the linearized system.
  Since $K$ stabilizes a linearized system of a nonlinear process about the
  steady state $(\xb,\ub)$, it follows from \Cref{prp:stabnl} that it also
  stabilizes the nonlinear process locally at $(\xb,\ub)$, which concludes the
  proof.
\end{proof}

It has been shown in~\cite[Cor.~3 and~4]{WerP22} that the minimum number of
state observations required for
stabilizing all low-dimensional linear systems explaining a given data triplet
scales with the intrinsic, minimal state-space dimension of the underlying
system that is described by the model from which data were obtained.
Building on the data reduction in~\cref{eqn:datatrunc} in
\Cref{thm:partstabdata}, the following corollary shows that the restriction to
the unstable dynamics in $\Wcal$ 
can further reduce the number of required state observations
to $\neigr$, the dimension of $\Wcal$.

\begin{corollary}%
  \label{cor:numsampunstab}
  Let \Cref{thm:partstabdata} apply and let $\neigr$ be the number of excitable
  unstable eigenvalues of the linear model corresponding to the nonlinear
  system.
  Then, there always exist $\neigr$ states of the linear model that are
  sufficient to be observed for inferring a guaranteed locally stabilizing
  controller for the nonlinear system, even if high-dimensional states of
  dimension $\nh \gg \neigr$ are observed.
\end{corollary}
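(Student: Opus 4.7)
The plan is to combine \Cref{thm:partstabdata} with the sampling bound of~\cite[Cor.~3 and~4]{WerP22} applied on the reduced level. First, I would form the projected data triplet $\datatripr$ by setting $\Xr_{-} = W^{\trans} X_{-}$ and $\Xr_{+} = W^{\trans} X_{+}$ as in~\cref{eqn:datatrunc}. The crucial bookkeeping observation is that each column of $\Xr_{-}$ (respectively $\Xr_{+}$) is computed from exactly one high-dimensional observation $\xn(t_{k})$ (respectively $\dxn(t_{k})$ or $\xn(t_{k} + 1)$) via multiplication by the fixed matrix $W^{\trans}$. Hence, the number of observations of the nonlinear system in $\R^{\nh}$ equals the number of samples $\dT$ appearing in the reduced triplet $\datatripr$ of state-space dimension $\neigr$.

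Next, I would invoke~\cite[Cor.~3 and~4]{WerP22} on the reduced linear model $(\Ar, \Br) \in \R^{\neigr \times \neigr} \times \R^{\neigr \times \np}$. These results state that the minimum number of samples required to make a data triplet informative for stabilization scales with the intrinsic state-space dimension of the underlying linear model, so $\dT = \neigr$ suitably chosen samples suffice to achieve $\Sigmar_{\rm{i/s}} \subseteq \Sigmar_{\Kr}$ for some $\Kr \in \R^{\np \times \neigr}$. The minimality of $W$---which by construction captures exactly those unstable modes of the linearized system that can be excited by inputs or admissible initial conditions in $\Xcaln$---ensures that corresponding excitations of the high-dimensional nonlinear system produce reduced trajectories whose data matrices meet the rank conditions underlying informativity.

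Given such an informative reduced triplet with $\dT = \neigr$, \Cref{thm:partstabdata} directly yields that $K = \Kr W^{\trans}$ stabilizes the linearized model at $(\xb, \ub)$ and, via \Cref{prp:stabnl}, locally stabilizes the nonlinear system. Thus $\neigr$ observations of the $\nh$-dimensional state suffice, regardless of how large $\nh$ is relative to $\neigr$.

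The main obstacle is the second step, namely certifying that $\neigr$ high-dimensional observations can always be arranged so that their $W^{\trans}$-projections produce a reduced data triplet satisfying the rank/informativity hypotheses of \cite[Cor.~3 and~4]{WerP22}. This relies on the excitability built into the definition of the minimal left eigenbasis $W$ (the unstable modes outside the Kalman-controllable and initial-condition-reachable subspaces in~\cref{eqn:kalman} have been excluded), so that inputs $U_{-}$ and initial conditions from $\Xcaln$ are rich enough to realize a $\neigr$-sample informative reduced triplet; once this is justified, the counting argument above closes the proof.
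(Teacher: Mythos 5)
Your proposal is correct and follows essentially the same route as the paper, which presents the corollary as a direct consequence of \Cref{thm:partstabdata} combined with the sample-count results of~\cite[Cor.~3 and~4]{WerP22} applied to the reduced, $\neigr$-dimensional model $(\Ar,\Br)$, with the minimality of $W$ (exclusion of non-excitable unstable modes via the Kalman form~\cref{eqn:kalman}) guaranteeing that the projected data can be made informative. The paper gives no more detailed argument than this, so your additional bookkeeping of one high-dimensional observation per reduced column and your explicit flagging of the excitability requirement only make the same reasoning more precise.
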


The number of state observations $\neigr$ in \Cref{cor:numsampunstab}
for constructing a guaranteed stabilizing controller for the nonlinear system
does neither scale with the large state dimension $\nh$ nor with the intrinsic
system dimension $\nmin$.
In fact, $\neigr$ is typically orders of magnitude smaller than $\nh$ and
$\nmin$. 
However, there is a certain price that needs to be paid for the application of
\Cref{thm:partstabdata,cor:numsampunstab}:
A basis matrix $W$ (or alternatively $Q_{\perp}$ from \Cref{lmm:orthcomp}) must
be available.
Approaches for the computation of $W$ (or $Q_{\perp}$) from data are discussed
in \Cref{sec:eigenspaces}.


\subsection{Computational approach for controller inference}%
\label{subsec:partstabalg}

This section introduces a computational approach for context-aware controller
inference that is motivated by the theoretical considerations of the previous
sections.
We now consider data $\datatripn$ from the nonlinear system rather
than idealized data $\datatrip$ as in the previous section.
The computational procedure is summarized in \Cref{alg:partstab}.

\begin{algorithm}[t]
  \SetAlgoHangIndent{1pt}
  \DontPrintSemicolon
  \caption{Context-aware controller inference}
  \label{alg:partstab}
  
  \KwIn{Data triplet $\datatripn$, steady state $(\xb,\ub)$,
    basis matrix $\Wt \in \R^{\nh \times \neig}$ of the left eigenspace $\Wcal$
    corresponding to the unstable eigenvalues of the linear model.}
  \KwOut{Controller $K$.}
  
  Shift $\datatripn$ by the steady state $(\xb, \ub)$ to obtain $\datatripg$
    via~\cref{eqn:datatransct} (or~\cref{eqn:datatransdt}).\;
  \label{alg:partstab_shift}
  
  Project onto the left eigenspace $\Wcalt$ so that only the unstable
    dynamics in the data remain
    \begin{equation*}
      \begin{aligned}
        \Xt_{-} & = \Wt^{\trans} \Xg_{-} & \text{and} &&
          \Xt_{+} & = \Wt^{\trans} \Xg_{+}.
      \end{aligned}
    \end{equation*}
    \vspace{-\baselineskip}\;
  \label{alg:partstab_proj}
    
  Derive the basis matrix $V \in \R^{\neig \times \neigr}$ of $\Vcal$ using the
    singular value decomposition
    \begin{equation*}
      \begin{bmatrix} \Xt_{-} & \Xt_{+} \end{bmatrix} =
        \begin{bmatrix} V & V_{2} \end{bmatrix}
        \begin{bmatrix} \Sigma_{1} & 0 \\ 0 & 0 \end{bmatrix}
        U^{\trans}.\;
    \end{equation*}
  \label{alg:partstab_svd}
  
  Project onto $\Vcal$ so that only unstable and excitable dynamics remain
    \begin{equation*}
      \begin{aligned}
        \Xr_{-} & = V^{\trans} \Xt_{-} & \text{and} &&
          \Xr_{+} & = V^{\trans} \Xt_{+}.
      \end{aligned}
    \end{equation*}
    \vspace{-\baselineskip}\;
  \label{alg:partstab_projV}
  
  Compute the eigenbasis matrix $W = \Wt V$ of the minimal subspace of unstable
  dynamics.\;
  \label{alg:partstab_WV}

  Infer a low-dimensional stabilizing controller $\Kr = \Ug_{-}
    \Theta (\Xr_{-} \Theta)^{-1}$ from $\Ug_{-}, \Xr_{-},$ and  $\Xr_{+}$ by
    solving~\cref{eqn:ctlmi} (or~\cref{eqn:dtlmi}) for the unknown $\Theta$.\;
  \label{alg:partstab_infer}

  Lift the inferred low-dimensional controller $\Kr$ to 
    \begin{equation*}
      K = \Kr W^{\trans}.
    \end{equation*}
    \vspace{-\baselineskip}\;
  \label{alg:partstab_lift}
\end{algorithm}

In Step~\ref{alg:partstab_shift} of \Cref{alg:partstab}, the data $\datatripn$
from the nonlinear system are shifted as
\begin{equation} \label{eqn:datatransct}
  \begin{aligned}
    \Ug_{-} & = \begin{bmatrix} \un(t_{0}) - \ub & \ldots &
      \un(t_{\dT - 1}) - \ub \end{bmatrix},\quad
    \Xg_{-}  = \begin{bmatrix} \xn(t_{0}) - \xb &  \ldots &
      \xn(t_{\dT - 1}) - \xb \end{bmatrix},\\
    \Xg_{+} & = \begin{bmatrix} \dxn(t_{0}) &  \ldots &
      \dxn(t_{\dT - 1}) \end{bmatrix},
  \end{aligned}
\end{equation}
in the continuous-time case and 
\begin{equation} \label{eqn:datatransdt}
  \begin{aligned}
    \Ug_{-} & = \begin{bmatrix} \un(0) - \ub & \ldots &
      \un(\dT - 1) - \ub \end{bmatrix},\quad
    \Xg_{-}  = \begin{bmatrix} \xn(0) - \xb & \ldots &
      \xn(T - 1) - \xb \end{bmatrix}, \\
    \Xg_{+} & = \begin{bmatrix} \xn(1) - \xb  & \ldots &
      \xn(T) - \xb \end{bmatrix},
  \end{aligned}
\end{equation}
in the discrete-time case. The shifted
data~\cref{eqn:datatransct,eqn:datatransdt} are related to the idealized data
$\datatrip$, which are used in
\Cref{subsec:partstabdata}, as 
\begin{equation*}
 U_{-}  = \Ug_{-},\quad
    X_{-}  = \Xg_{-} + \mathcal{O}\left( (\Xn_{-} - \xb)^{2} \right),\quad
    X_{+}  = \Xg_{+} + \mathcal{O}\left( (\Xn_{+} - \xb)^{2} \right) +
      \mathcal{O}\left( (\Un_{+} - \ub)^{2} \right),
\end{equation*}
which holds because of the Taylor approximation~\cref{eqn:taylor} and shows that
data from the nonlinear system are close to the idealized data if
trajectories stay within a small neighborhood of $(\xb, \ub)$. 

Step~\ref{alg:partstab_proj} of \Cref{alg:partstab} projects the shifted data
onto the subspace of unstable dynamics $\Wcal$ so that only the unstable parts
of the trajectories remain in $\Xt_{-}$ and $\Xt_{+}$.
The basis matrix $\Wt$ may not span the minimal subspace of unstable dynamics as
required by \Cref{thm:partstabdata}, i.e., the subspace $\Wcalt$ contains an
eigenspace of $A$ corresponding to zero dynamics; cf. the third block row
in~\cref{eqn:kalman}.
This can be the case when $\Wt$ is computed as shown in the next section.
The zero dynamics lead to a smaller minimal system dimension $\neigr < \neig$
and rank deficient data matrices $\Xt_{-}$ and $\Xt_{+}$.
Any rank revealing decomposition of the data can be used to remove their kernel
and reduce the eigenspace to the requested minimal subspace.
In Step~\ref{alg:partstab_svd}, the singular value decomposition is used
for this purpose.
Then the data is projected onto the minimal subspace of unstable dynamics
in Step~\ref{alg:partstab_projV} and the corresponding basis matrix is computed
in Step~\ref{alg:partstab_WV}.

Step~\ref{alg:partstab_infer} of \Cref{alg:partstab} infers a controller from
the unstable and excitable dynamics in $\Xr_{-}$ and $\Xr_{+}$.
The controller is given by $\Kr = \Ug_{-} \Theta (X_{-} \Theta)^{-1}$, where
$\Theta \in \R^{T \times \neigr}$ solves, in the continuous-time case,
\begin{equation} \label{eqn:ctlmi}
  \begin{aligned}
    \Xr_{-} \Theta & > 0 & \text{and} &&
    \Xr_{+} \Theta + \Theta^{\trans} \Xr_{+}^{\trans} < 0
  \end{aligned}
\end{equation}
and in the discrete-time case
\begin{equation} \label{eqn:dtlmi}
  \begin{aligned}
    \Xr_{-} \Theta & = (\Xr_{-} \Theta)^{\trans} & \text{and} &&
      \begin{bmatrix} \Xr_{-} \Theta & \Xr_{+} \Theta \\
      (\Xr{+} \Theta)^{\trans} & \Xr_{-} \Theta \end{bmatrix} > 0;
  \end{aligned}
\end{equation}
see~\cite{WerP22,VanETetal20,DePT20}.

In the last step of \Cref{alg:partstab}, the inferred controller $\Kr$ is lifted
to the original state-space dimension $K = \Kr W^{\trans}$ such that
$\un(t) = K (\xn(t) - \xb(t)) + \ub(t)$ stabilizes the nonlinear
system under the assumptions given in \Cref{prp:stabnl}.

In \Cref{alg:partstab}, the stabilizing controller is inferred from $\Xr_{-}$
and $\Xr_{+}$ by solving the linear matrix
inequalities~\cref{eqn:ctlmi,eqn:dtlmi}, respectively.
If the number of observed states $\dT$ equals
the dimension $\neigr$ of the unstable dynamics, then a computationally more
efficient alternative to solving~\cref{eqn:ctlmi,eqn:dtlmi} is introduced
in~\cite[Thm.~16]{VanETetal20} and~\cite[Prp.~2 and Cor.~1]{WerP22}.
It computes only the inverse of the reduced data matrix $\Xr_{-}$.
Also, to identify a model that describes the unstable system
dynamics, Step~\ref{alg:partstab_infer} can be replaced
by~\cite[Prp.~1]{WerP22}.
With the identified model describing the unstable dynamics, classical
stabilization approaches can be employed to compute a suitable $\Kr$.

Note that the system needs to be stabilizable for the
matrix inequalities~\cref{eqn:ctlmi,eqn:dtlmi} to be solvable.
If the system is not stabilizable, the inequalities have no solution and the
solvers employed in Step~\ref{alg:partstab_infer} of \Cref{alg:partstab} throw
an error that no stabilizing controller can be inferred.


\section{Estimating left eigenspaces from gradient samples}%
\label{sec:eigenspaces}

In this section, we discuss leveraging samples of gradients to estimate a basis
matrix $W$ of the left eigenspace $\Wcal$ of unstable eigenvalues.
Let
\begin{equation} \label{eqn:fimplicit}
  F(\xn(t), \un(t)) := \dxn(t) - f\big(\xn(t), \un(t)\big)
\end{equation}
denote the residual formulation of the continuous-time model~\cref{eqn:ctnsys}.
The transposed gradient of~\cref{eqn:fimplicit} at the steady state $(\xb, \ub)$
is
\begin{equation} \label{eqn:adjointsteady}
  \Fcal[\xb, \ub] := \Big(\partial_{\rm{x}} F\big(\xb, \ub\big)\Big)^{\trans}
    = \left(\partial_{\rm{x}}\frac{\rm{d}}{\rm{d} t}
    \xb\right)^{\trans}
    - \big(\partial_{\rm{x}} f(\xb, \ub)\big)^{\trans}
  = -A^{\trans},
  \end{equation}
which can be applied to a vector $x$ as
\begin{equation}
  \label{eqn:adjointop}
  \mathcal{F}[\xb, \ub](x) := -A^{\trans} x,
\end{equation}
where $A$ is the state matrix of the linearized model from~\cref{eqn:ctsys}.
A basis of the left eigenspace $\Wcal$ of $A$ can be
obtained from samples of~\cref{eqn:adjointop}.
The same holds for the discrete-time case~\cref{eqn:dtnsys}.

The transposed gradient of the residual~\cref{eqn:fimplicit} occurs, for
example, in the adjoint equation of~\cref{eqn:fimplicit}.
For some cost functional $J$ that depends on the solution trajectory
$(\xn(t))_{t \geq 0}$ and the input
$(\un(t))_{t \geq 0}$, the adjoint equation is
\begin{equation} \label{eqn:adjoint}
  \Big(\partial_{\rm{x}} F\big(\xn(t), \un(t)\big)\Big)^{\trans}
    \lambda(t) = \left(\partial_{\rm{x}} J\big(\xn(t), \un(t)\big)
    \right)^{\trans},
\end{equation}
where $\lambda(t) \in \R^{\nh}$ are the adjoint variables and
$\partial_{\rm{x}}$ denotes the partial derivative with respect to $\xn(t)$;
see, e.g.,~\cite[Sec.~2.1]{FarHFetal13}.
The matrix on the left-hand side of~\cref{eqn:adjoint} is the
linear adjoint operator of~\cref{eqn:fimplicit}, which describes the
linearized dynamics about the solution $\xn(t)$ backwards in time.
From~\cref{eqn:adjointsteady,eqn:adjoint} it follows that the solution of the
adjoint equation at the steady state of interest amounts to the evaluation of
the negative, transposed state matrix of the linear
model~\cref{eqn:adjointsteady} that has been considered for the theory in
\Cref{sec:inferpartstab}.
In this setting, the resulting matrix in~\cref{eqn:adjointsteady} is called the
adjoint operator, which can be applied to tangent directions $x \in \C^{\nh}$
as shown in~\cref{eqn:adjointop}.


\subsection{Active estimation of left eigenspaces}%
\label{subsubsec:krylov}

It is a common situation in major software packages, for example, in FEniCS and
Firedrake~\cite{MitFD19}, that the function~\cref{eqn:adjointop} is
provided by the ability to solve the adjoint equation~\cref{eqn:adjoint}.
Thus, models that are implemented in such software packages allow
querying~\cref{eqn:adjointop} in a black-box fashion.
Similarly, models of nonlinear systems where the right-hand side function $f$
in~\cref{eqn:ctnsys} (or~\cref{eqn:dtnsys}) is implemented with automatic
differentiation also typically provide a vector-Jacobian product routine
\texttt{vjp} that computes a query of the adjoint operator into given
directions~\cref{eqn:adjointop}; see~\cite{BraFHetal18, FroJL18}.
Motivated by these capabilities, we consider in this section the situation that
we are allowed to sample the transposed gradient in~\cref{eqn:adjointop} at
tangent directions $x \in \C^{\nh}$.

With the function $\Fcal[\xb, \ub]$ from~\cref{eqn:adjointop} available, Krylov
subspace methods can be directly applied to compute eigenvalues and eigenvectors
of $-A^{\trans}$, without having to assemble $A$ or $-A^{\trans}$; see, for
example,~\cite{Ste01,GolV13}.
Krylov subspace methods converge first to eigenvalues with large absolute
values.
In the discrete-time case, the unstable eigenvalues, which are the ones we are
interested in, have larger absolute values than the stable eigenvalues.
Thus, in the discrete-time case, Krylov subspace methods first converge to the
eigenvalues and eigenvectors that we need for controller inference.

In the continuous-time case, shift-and-invert operators can be applied to make
Krylov methods converge quickly to the unstable eigenvalues~\cite{Ruh84}.
The idea is to shift the considered linear operator~\cref{eqn:adjointop} with a
given $\sigma \in \C$ and invert the shifted operator such that the eigenvalues
closest to $\sigma$ in the complex plane become those with largest absolute
value to which the Krylov methods converge first.
Given a suitable $\sigma \in \C$, (rational) Krylov subspace methods, like
the conjugate gradient method and GMRES~\cite[Chap.~10]{GolV13}, solve linear
systems of the form
\begin{equation} \label{eqn:linsolve}
  -\Fcal[\xb, \ub](x) - \sigma x = b,
\end{equation}
for the unknown vector $x \in \C^{\nh}$ and a given right-hand side
$b \in \C^{\nh}$.


\subsection{Estimation of left eigenspaces from gradient samples}%
\label{subsubsec:eigendata}

Consider now the situation that we have a discrete-time system and data from the
operator~\cref{eqn:adjointop} such that the column-wise evaluation
$\Fcal[\xb, \ub](X_{-}^{\rm{a}}) = X_{+}^{\rm{a}}$ holds, where
\begin{equation} \label{eqn:adjointdata}
  \begin{aligned}
    X^{\rm{a}}_{-} & = \begin{bmatrix} x_{0}^{\rm{a}} & \ldots &
      x_{\dT-1}^{\rm{a}} \end{bmatrix} & \text{and} &&
    X^{\rm{a}}_{+} & = \begin{bmatrix} x_{1}^{\rm{a}} & \ldots &
      x_{\dT}^{\rm{a}} \end{bmatrix}.
  \end{aligned}
\end{equation}
This is a typical situation when the adjoint equation~\cref{eqn:adjoint} is 
solved by an iterative method.
The estimation of the eigenbasis of interest from such data is possible using
the dynamic mode decomposition~\cite{Sch10, TuRLetal14}.
The eigenvalues and eigenvectors of
\begin{equation} \label{eqn:adjointdmd}
  H_{\rm{a}} = X^{\rm{a}}_{+} \Vc \Sigmac^{-1} \Uc^{\trans},
\end{equation}
with the economy-size singular value decomposition
$X^{\rm{a}}_{-} = \Uc \Sigmac \Vc$, are approximations to the eigenvalues and
eigenvectors of $-A^{\trans}$; see~\cite[Sec.~2]{TuRLetal14}.
Note that computations with the large-scale dense
matrix~\cref{eqn:adjointdmd} to obtain the eigenvalues and eigenvectors can be
avoided via a low-dimensional reformulation of~\cref{eqn:adjointdmd};
see~\cite{ProBK16}.

If the data in~\cref{eqn:adjointdata} is generated as recursive sequence, i.e., 
$x_{k+1}^{\rm{a}} = \Fcal[\xb, \ub](x_{k}^{\rm{a}})$ for
$k = 0, 1, \ldots, \dT - 1$,
the theory about Krylov subspace methods applies and the eigenvalues
converge first to those of $-A^{\trans}$ with largest magnitude, i.e.,
the requested unstable eigenvalues~\cite{MisP29}.
For the continuous-time case, such kind of results about eigenvalue estimation
are unknown as of now to the best of the authors' knowledge.


\section{Numerical examples}%
\label{sec:examples}

The experiments have been run on 
an Intel(R) Core(TM) i7-8700 CPU at 3.20\,GHz with 16\,GB main memory.
The algorithms are implemented in MATLAB 9.9.0.1467703 (R2020b) on CentOS Linux
release 7.9.2009 (Core).
For the comparison with reinforcement learning, we use the implementations from
the Reinforcement Learning Toolbox version 1.3.
For solving the linear matrix inequalities~\cref{eqn:ctlmi,eqn:dtlmi},
the disciplined convex programming toolbox CVX version 2.2, build 1148
(62bfcca)~\cite{GraB08, GraB20} is used together with
MOSEK version 9.1.9~\cite{MOS19} as inner optimizer.
Source code, data, and numerical results are available at~\cite{supWer23}.


\subsection{Experimental setup}%
\label{subsec:setup}


\subsubsection{Simulations, data generation and tests}%
\label{subsubsec:simulations}

Data are generated with realizations of normally distributed inputs.
Bases of the unstable dynamics (left eigenbases of unstable eigenvalues) are
estimated from adjoint operators with MATLAB's \texttt{eigs}, which employs the
Krylov-Schur method~\cite{Ste01} as suggested in \Cref{subsubsec:krylov}.
In case of linear system dynamics, we assume zero initial conditions and a test
input is chosen as unit step, which means that the states converge to a constant
vector if the controller stabilizes the system.
In case of nonlinear dynamics, the initial condition is the steady state of
interest to which an input disturbance is applied at time 0.
The states will then converge to the
steady state, if the controller stabilizes the system.

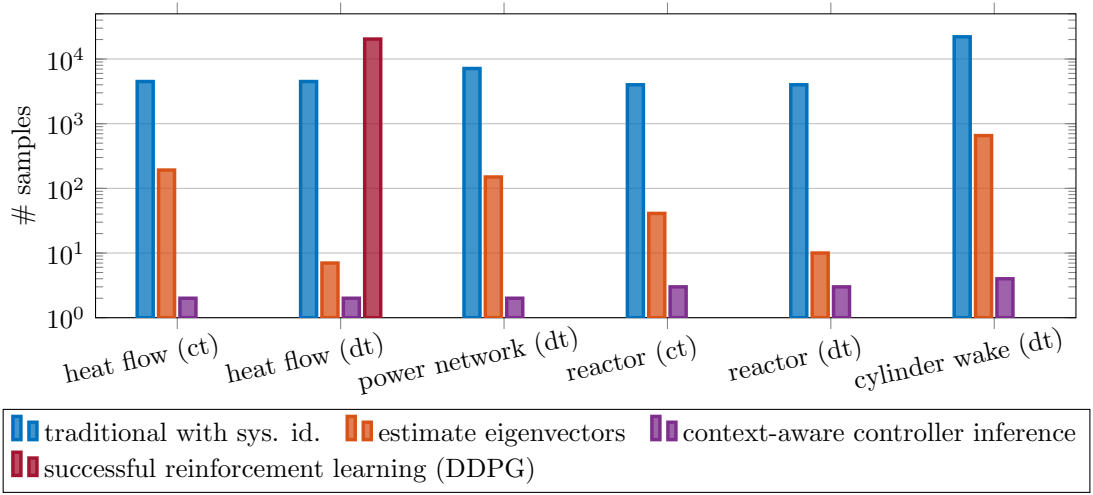
\begin{figure}[t]
  \centering
  \tikzexternalenable%
  \tikzsetnextfilename{overview_samples}%
  \begin{tikzpicture}[font = \plotfontsize]
  \begin{axis}[
    width  = .86\textwidth,
    height = .17\textheight,
    scale only axis,
    ybar,
    bar width    = 6pt,
    ymode        = log,
    ymin         = 1,
    ymax         = 5e+04,
    ylabel       = {\#~samples},
    ymajorgrids,
    ylabel style = {yshift = -.3em},
    xtick             = data,
    symbolic x coords = {heat~flow~(ct), heat~flow~(dt), power~network~(dt),
      reactor~(ct), reactor~(dt), cylinder~wake~(dt)},
    xticklabel style  = {rotate = 12, anchor = north, xshift = -1.4em},
    legend columns    = 3,
    legend cell align = {left},
    legend style      = {
      at     = {(.46,-0.3)},
      anchor = north,
      /tikz/every even column/.append style = {column sep = 0.25cm}},
    cycle list name = samplelist
  ]
  
    \addplot+[ybar, bar shift = {-12pt}] coordinates{
      (heat~flow~(ct), 4491)
      (heat~flow~(dt), 4491)
      (power~network~(dt), 7139)
      (reactor~(ct), 4000)
      (reactor~(dt), 4000)
      (cylinder~wake~(dt), 22066)
    };
    \addlegendentry{traditional with sys. id.}
  
    \addplot+[ybar, bar shift = {-4pt}] coordinates{
      (heat~flow~(ct), 192)
      (heat~flow~(dt), 7)
      (power~network~(dt), 150)
      (reactor~(ct), 41)
      (reactor~(dt), 10)
      (cylinder~wake~(dt), 655)
    };
    \addlegendentry{estimate eigenvectors}
  
    \addplot+[ybar, bar shift = {4pt}] coordinates{
      (heat~flow~(ct), 2)
      (heat~flow~(dt), 2)
      (power~network~(dt), 2)
      (reactor~(ct), 3)
      (reactor~(dt), 3)
      (cylinder~wake~(dt), 4)
    };
    \addlegendentry{context-aware controller inference}
  
    \addplot+[ybar, bar shift = {12pt}] coordinates{
      (heat~flow~(dt), 20365)
    };
    \addlegendentry{\makebox[0pt][l]{successful reinforcement learning (DDPG)}}
  
    \addlegendimage{empty legend}
    \addlegendentry{}
    
    \addlegendimage{empty legend}
    \addlegendentry{}
  \end{axis}
\end{tikzpicture}%
  \tikzexternaldisable%

  \caption{Summary of numerical experiments:
    The proposed context-aware controller inference approach provides
    stabilizing controllers in all experiments with an at least one order of
    magnitude lower number of data samples (state observations +
    gradient samples) than traditional two-step
    approaches that first identify models and then construct controllers.
    Reinforcement learning via deep deterministic policy gradient (DDPG)
    manages to stabilize only one system in our experiments, for which it
    requires $2\,260$ times as many samples as the proposed context-aware
    controller inference.}
  \label{fig:overview_samples}
\end{figure}

\Cref{fig:overview_samples} provides an overview about the performance of
context-aware controller inference in the following numerical experiments.
In all experiments, context-aware controller inference requires at least one
order of magnitude fewer data samples to derive stabilizing controllers
than required to learn a model for the traditional two-step data-driven control
approach, including the number of gradient samples to estimate the bases of
unstable dynamics.
For the identification of a model that is guaranteed to capture the system
dynamics for the construction of controllers in general, at least
$T = \nh + \np$ samples are necessary, where $\nh$ is the dimension of the full
state space and $\np$ the number of inputs~\cite{WerP22}.
Under certain assumptions one can identify systems with fewer samples if
structure in the system dynamics is present; however, structure such as low
rankness depends on the underlying physics and typically still requires
subspaces of higher dimensions than the subspaces of unstable dynamics that are
used in our approach; see the introduction in \Cref{sec:intro}.


\subsubsection{Comparison with reinforcement learning with deep deterministic policy gradient}%
\label{subsubsec:ddpg}

We compare our approach in the discrete-time case with controllers obtained from
reinforcement learning via the deep deterministic policy gradient (DDPG)
method~\cite{SilLHetal14, LilHPetal15}, which can handle continuous
observation and action spaces.
The setup for the DDPG agents is the same as for all numerical examples up to
tolerances as tuning parameters.
The actor is a shallow network with a single layer and zero bias such that the 
resulting weight matrix corresponds to the control matrix
$K \in \R^{\np \times \nh}$ and can be implemented the same way for simulations
as the controller designed by our proposed method.
The critic takes the concatenation of state observations and
the control inputs and applies to it a quadratic activation function.
The result is then compressed via a fully connected layer into the Q-value.
The reward function is
\begin{equation} \label{eqn:reward}
  r(t, x) = \begin{cases}
    -\alpha \frac{x^{\trans} x}{\lambda_{\rm{d}} n_{\rm{s}}(t)}, &
      \text{if}~\lVert x \rVert_{2} > \lambda_{\rm{u}} \sqrt{\nh},\\
    \alpha \left( x^{\trans} x \right) n_{\rm{s}}(t), &
      \text{if controller is stabilizing},\\
    -x^{\trans} x, & \text{otherwise},
  \end{cases}
\end{equation}
with the sum of steps $n_{\rm{s}}(t) = 0.5 t (t + 1)$ and problem-dependent
regularization parameters $\alpha = 100 \sqrt{\nh}$, $\lambda_{\rm{d}}$ and
$\lambda_{\rm{u}}$.
The reward function~\cref{eqn:reward} is motivated by the linear-quadratic
regulator design~\cite{Sim96} to penalize the deviation of the current state
from $0$; see, for example,~\cite{PerUS21} where a similar but continuous
reward function has been used.
The first branch in~\cref{eqn:reward} evaluates to a large negative reward
if the norm of the state becomes too large, which is an indication for
destabilization, after which the current training episode is terminated.
This is necessary to cope with the limits of numerical accuracy during the
training.
The second branch in~\cref{eqn:reward} gives a positive reward to end the
training with a stabilizing controller.
Note that it is not possible to check if a given controller stabilizes the
underlying system without system identification; cf.~\cite{VanETetal20,WerP22}.
We therefore fall back to a heuristic and check if the states of three
consecutive time steps are close to $0$ and then return a positive reward that
outweighs the previously accumulated negative rewards to end the training.
The training itself is performed using discrete-time simulations of the
linear models over the same time period as considered for the test
simulations and starting at normally distributed, scaled random initial
conditions.
We have set a maximum of $1\,000$ training episodes.
More details about the setup can be found in the supplementary
material~\cite{supWer23}.


\subsection{Experiments with linear systems}%
\label{subsec:lexamples}

In this section, we consider two examples with linear dynamics such that no
additional linearization or data shift in the sense
of~\cref{eqn:datatransct,eqn:datatransdt} is necessary.


\subsubsection{Disturbed heat flow}%
\label{subsubsec:hf2d5}

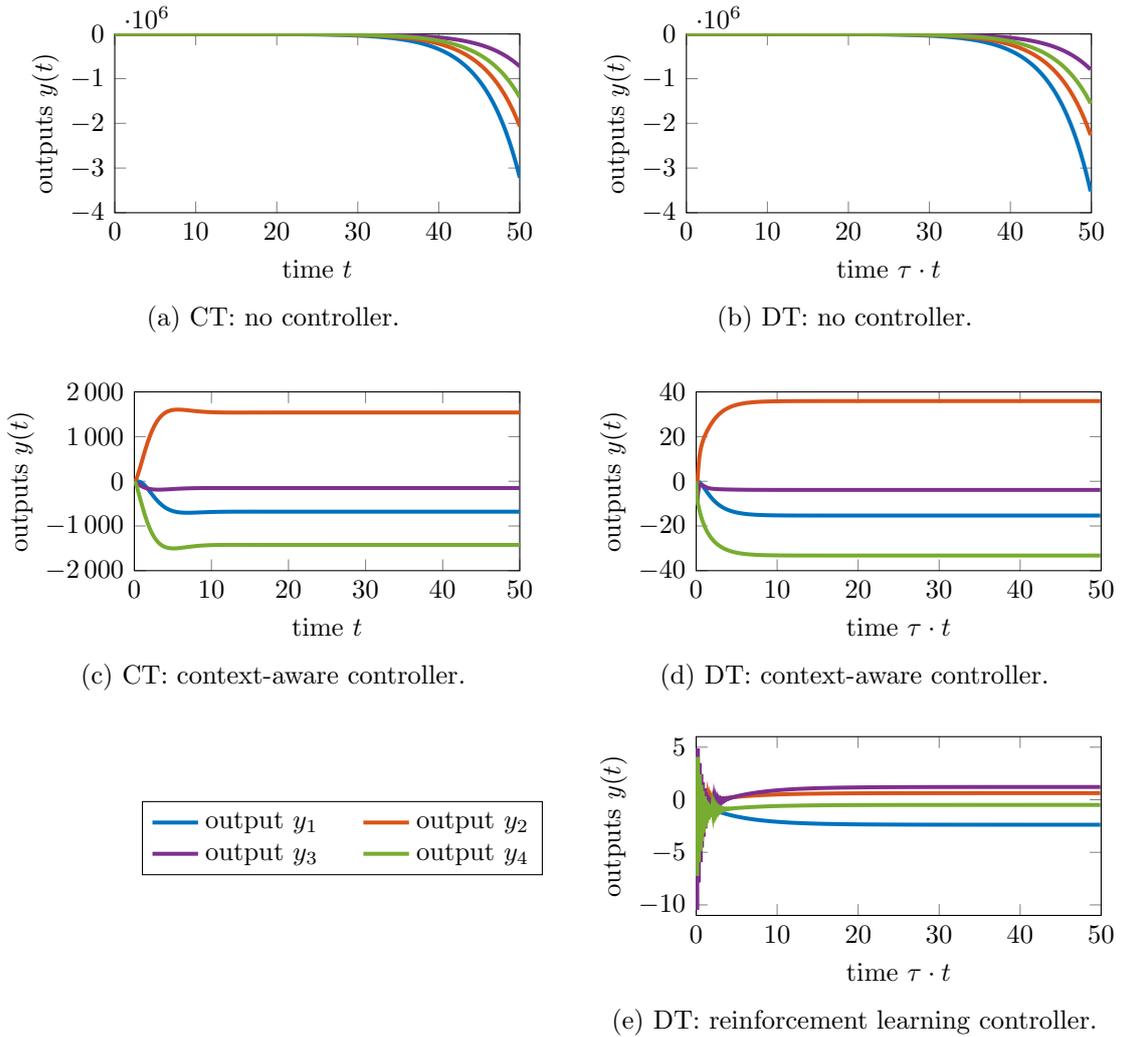
\begin{figure}[t]
  \centering%
  \begin{subfigure}[b]{.49\linewidth}
    \raggedleft
  \tikzexternalenable%
  \tikzsetnextfilename{hf2d5_ct_sim_nofb}%
  \begin{tikzpicture}[font = \plotfontsize]
  \pgfplotstableread{graphics/data/hf2d5_ct_sim_nofb.dat}\tableSIM
  
  \begin{axis}[%
    name   = states,
    width  = .725\textwidth,
    height = .1\textheight,
    scale only axis,
    xmin = 0,
    xmax = 50,
    ymin = -4e+06,
    ymax = 0,
    xminorticks = false,
    yminorticks = false,
    xlabel = {time $t$},
    ylabel = {outputs $y(t)$},
    ylabel style   = {yshift = -.3em},
    scaled x ticks = false,
    x tick label style = {/pgf/number format/1000 sep={\,}},
    y tick label style = {/pgf/number format/1000 sep={\,}},
    cycle list name    = stateslist
  ]
  
    \foreach \y in {3, 4, ..., 6}{
      \addplot+ table[x index = 0, y index = \y] {\tableSIM};
    }
  \end{axis}
  \node[minimum size = 1cm, inner sep = 0pt, outer sep = 0pt]
    at (states.north) {};
\end{tikzpicture}%
  \tikzexternaldisable%

    \caption{CT: no controller.}
    \label{fig:hf2d5_a}
  \end{subfigure}%
  \hfill%
  \begin{subfigure}[b]{.49\linewidth}
    \raggedleft
  \tikzexternalenable%
  \tikzsetnextfilename{hf2d5_dt_sim_nofb}%
  \begin{tikzpicture}[font = \plotfontsize]
  \pgfplotstableread{graphics/data/hf2d5_dt_sim_nofb.dat}\tableSIM
  
  \begin{axis}[%
    name   = states,
    width  = .725\textwidth,
    height = .1\textheight,
    scale only axis,
    xmin = 0,
    xmax = 50,
    ymin = -4e+06,
    ymax = 0,
    xminorticks = false,
    yminorticks = false,
    xlabel = {time $\tau \cdot t$},
    ylabel = {outputs $y(t)$},
    ylabel style   = {yshift = -.3em},
    scaled x ticks = false,
    x tick label style = {/pgf/number format/1000 sep={\,}},
    y tick label style = {/pgf/number format/1000 sep={\,}},
    cycle list name    = stateslist
  ]
  
    \foreach \y in {3, 4, ..., 6}{
      \addplot+ table[x index = 0, y index = \y] {\tableSIM};
    }
  \end{axis}
  \node[minimum size = 1cm, inner sep = 0pt, outer sep = 0pt]
    at (states.north) {};
\end{tikzpicture}%
  \tikzexternaldisable%

    \caption{DT: no controller.}
    \label{fig:hf2d5_b}
  \end{subfigure}
  \vspace{1\baselineskip}
  
  \begin{subfigure}[b]{.49\linewidth}
    \raggedleft
  \tikzexternalenable%
  \tikzsetnextfilename{hf2d5_ct_sim_inferfb}%
  \begin{tikzpicture}[font = \plotfontsize]
  \pgfplotstableread{graphics/data/hf2d5_ct_sim_inferfb.dat}\tableSIM
  
  \begin{axis}[%
    name   = states,
    width  = .69\textwidth,
    height = .1\textheight,
    scale only axis,
    xmin = 0,
    xmax = 50,
    ymin = -2000,
    ymax = 2000,
    xminorticks = false,
    yminorticks = false,
    xlabel = {time $t$},
    ylabel = {outputs $y(t)$},
    ylabel style   = {yshift = -.3em},
    scaled x ticks = false,
    x tick label style = {/pgf/number format/1000 sep={\,}},
    y tick label style = {/pgf/number format/1000 sep={\,}},
    cycle list name    = stateslist
  ]
  
    \foreach \y in {3, 4, ..., 6}{
      \addplot+ table[x index = 0, y index = \y] {\tableSIM};
    }
  \end{axis}
\end{tikzpicture}%
  \tikzexternaldisable%

    \caption{CT: context-aware controller.}
    \label{fig:hf2d5_c}
  \end{subfigure}%
  \hfill%
  \begin{subfigure}[b]{.49\linewidth}
    \raggedleft
  \tikzexternalenable%
  \tikzsetnextfilename{hf2d5_dt_sim_inferfb}%
  \begin{tikzpicture}[font = \plotfontsize]
  \pgfplotstableread{graphics/data/hf2d5_dt_sim_inferfb.dat}\tableSIM
  
  \begin{axis}[%
    name   = states,
    width  = .725\textwidth,
    height = .1\textheight,
    scale only axis,
    xmin = 0,
    xmax = 50,
    ymin = -40,
    ymax = 40,
    xminorticks = false,
    yminorticks = false,
    xlabel = {time $\tau \cdot t$},
    ylabel = {outputs $y(t)$},
    ylabel style   = {yshift = -.3em},
    scaled x ticks = false,
    x tick label style = {/pgf/number format/1000 sep={\,}},
    y tick label style = {/pgf/number format/1000 sep={\,}},
    cycle list name    = stateslist
  ]
  
    \foreach \y in {3, 4, ..., 6}{
      \addplot+ table[x index = 0, y index = \y] {\tableSIM};
    }
  \end{axis}
\end{tikzpicture}%
  \tikzexternaldisable%

    \caption{DT: context-aware controller.}
    \label{fig:hf2d5_d}
  \end{subfigure}
  
  \begin{subfigure}[t]{.49\linewidth}
    \raggedleft
    \vspace{2\baselineskip}
  \tikzexternalenable%
  \tikzsetnextfilename{hf2d5_legend}%
  \begin{tikzpicture}[font = \plotfontsize]
  \begin{axis}[%
    hide axis,
    width  = 1cm,
    height = 1cm,
    scale only axis,
    xmin = 0,
    xmax = 10,
    ymin = 0.5,
    ymax = 1.5,
    legend columns    = 2,
    legend cell align = {left},
    legend style      = {
      at     = {(0,0)},
      anchor = center,
      /tikz/every even column/.append style = {column sep = 0.5cm}}
  ]
    
    \pgfplotsset{cycle list name = stateslist}
    \pgfplotsinvokeforeach{1, 2, ..., 4}{\addplot coordinates {(0,0)};}
    \addlegendentry{output $y_{1}$}
    \addlegendentry{output $y_{2}$}
    \addlegendentry{output $y_{3}$}
    \addlegendentry{output $y_{4}$}
  \end{axis}
\end{tikzpicture}%
  \tikzexternaldisable%

  \end{subfigure}%
  \hfill%
  \begin{subfigure}[t]{.49\linewidth}
    \raggedleft
    \vspace{0\baselineskip}
  \tikzexternalenable%
  \tikzsetnextfilename{hf2d5_dt_sim_learnfb}%
  \begin{tikzpicture}[font = \plotfontsize]
  \pgfplotstableread{graphics/data/hf2d5_dt_sim_learnfb.dat}\tableSIM
  
  \begin{axis}[%
    name   = states,
    width  = .725\textwidth,
    height = .1\textheight,
    scale only axis,
    xmin = 0,
    xmax = 50,
    ymin = -11,
    ymax = 6,
    xminorticks = false,
    yminorticks = false,
    xlabel = {time $\tau \cdot t$},
    ylabel = {outputs $y(t)$},
    ylabel style   = {yshift = -.3em},
    scaled x ticks = false,
    x tick label style = {/pgf/number format/1000 sep={\,}},
    y tick label style = {/pgf/number format/1000 sep={\,}},
    cycle list name    = stateslist
  ]
  
    \foreach \y in {3, 4, ..., 6}{
      \addplot+ table[x index = 0, y index = \y] {\tableSIM};
    }
  \end{axis}
\end{tikzpicture}%
  \tikzexternaldisable%

    \caption{DT: reinforcement learning controller.}
    \label{fig:hf2d5_e}
  \end{subfigure}

  \caption{Heat flow: The proposed context-aware controller inference 
    stabilizes the system in continuous and discrete time.
    By using $2\,260 \times$ more samples than context-aware controller
    inference, DDPG (reinforcement learning) also stabilizes the system, but it
    is more aggressive and leads to high-frequency oscillations at the
    beginning of the time interval.}
  \label{fig:hf2d5}
\end{figure}

\begin{figure}[t]
  \centering%
  \begin{subfigure}[b]{.49\linewidth}
    \raggedleft
  \tikzexternalenable%
  \tikzsetnextfilename{hf2d5_training}%
  \begin{tikzpicture}[font = \plotfontsize]
  \pgfplotstableread{graphics/data/hf2d5_dt_learnfb_train.dat}\tableTRAIN
  \pgfplotstablecreatecol[create col/expr={-log10(-\thisrow{1})}]{}\tableTRAIN
  
  \begin{axis}[%
    width  = .7\textwidth,
    height = .15\textheight,
    scale only axis,
    xmin = 0,
    xmax = 80,
    ymode = log,
    ymin = 1e-1,
    ymax = 1e+12,
    y dir       = reverse,
    ytick       = {1e+0, 1e+3, 1e+6, 1e+9, 1e+12},
    yticklabels = {$-10^{0}$, $-10^{3}$, $-10^{6}$, $-10^{9}$, $-10^{12}$},
    xminorticks = false,
    yminorticks = false,
    xlabel = {episode},
    ylabel = {accumulated reward},
    ylabel style   = {yshift = -.3em},
    scaled x ticks = false,
    x tick label style = {/pgf/number format/1000 sep={\,}},
    y tick label style = {/pgf/number format/1000 sep={\,}},
    cycle list name    = trainlist,
    cycle list shift   = 2
  ]
  
    \addplot+[const plot] table[x index = 0, y expr = -\thisrowno{1}]
      {\tableTRAIN};
  \end{axis}
\end{tikzpicture}%
  \tikzexternaldisable%

    \caption{DDPG training performance.}
    \label{fig:hf2d5_train_a}
  \end{subfigure}%
  \hfill%
  \begin{subfigure}[b]{.49\linewidth}
    \raggedleft
  \tikzexternalenable%
  \tikzsetnextfilename{hf2d5_rewards}%
  \begin{tikzpicture}[font = \plotfontsize]
  \pgfplotstableread{graphics/data/hf2d5_dt_rewards.dat}\tableREW
  
  \begin{loglogaxis}[%
    width  = .7\textwidth,
    height = .15\textheight,
    scale only axis,
    xmin = 1,
    xmax = 3e+4,
    ymin = 1e-1,
    ymax = 1e+12,
    y dir       = reverse,
    ytick       = {1e+0, 1e+3, 1e+6, 1e+9, 1e+12},
    yticklabels = {$-10^{0}$, $-10^{3}$, $-10^{6}$, $-10^{9}$, $-10^{12}$},
    xminorticks = false,
    yminorticks = false,
    xlabel = {\# samples / evaluations},
    ylabel = {episodal reward},
    ylabel style   = {yshift = -.3em},
    scaled x ticks = false,
    x tick label style = {/pgf/number format/1000 sep={\,}},
    y tick label style = {/pgf/number format/1000 sep={\,}},
    cycle list name    = trainlist
  ]
  
    \addplot+[const plot] table[x index = 0, y expr = -\thisrowno{1}] {\tableREW};
    \addplot+[const plot] table[x index = 0, y expr = -\thisrowno{2}] {\tableREW};
    \addplot+[const plot] table[x index = 0, y expr = -\thisrowno{3}] {\tableREW};
  \end{loglogaxis}
\end{tikzpicture}%
  \tikzexternaldisable%

    \caption{Reward comparison.}
    \label{fig:hf2d5_train_b}
  \end{subfigure}
  
  \vspace{.75\baselineskip}
  \tikzexternalenable%
  \tikzsetnextfilename{hf2d5_train_legend}%
  \begin{tikzpicture}[font = \plotfontsize]
  \begin{axis}[%
    hide axis,
    width  = 1cm,
    height = 1cm,
    scale only axis,
    xmin = 0,
    xmax = 10,
    ymin = 0.5,
    ymax = 1.5,
    legend columns    = 2,
    legend cell align = {left},
    legend style      = {
      at     = {(0,0)},
      anchor = center,
      /tikz/every even column/.append style = {column sep = 0.5cm}}
  ]
    
    \pgfplotsset{cycle list name = trainlist, cycle list shift = 2}
    \pgfplotsinvokeforeach{1, 2, 3}{\addplot coordinates {(0,0)};}
    \addlegendentry{reinforcement learning}
    \addlegendentry{context-aware control, excl. eigenbasis estimation}
    \addlegendentry{\makebox[0pt][l]{context-aware control, incl.
      eigenbasis estimation}}
  \end{axis}
\end{tikzpicture}%
  \tikzexternaldisable%

  \vspace{0\baselineskip}
  
  \caption{Heat flow:
    DDPG takes $77$ episodes to learn a controller that satisfies the
    stabilization test.
    This corresponds to $10^{4} \times$ more state observations 
    than context-aware controller inference if an estimate of the basis of the
    unstable dynamics is available a priori and $2\,260 \times$ more observations
    if a basis has to be estimated.}
  \label{fig:hf2d5_train}
\end{figure}
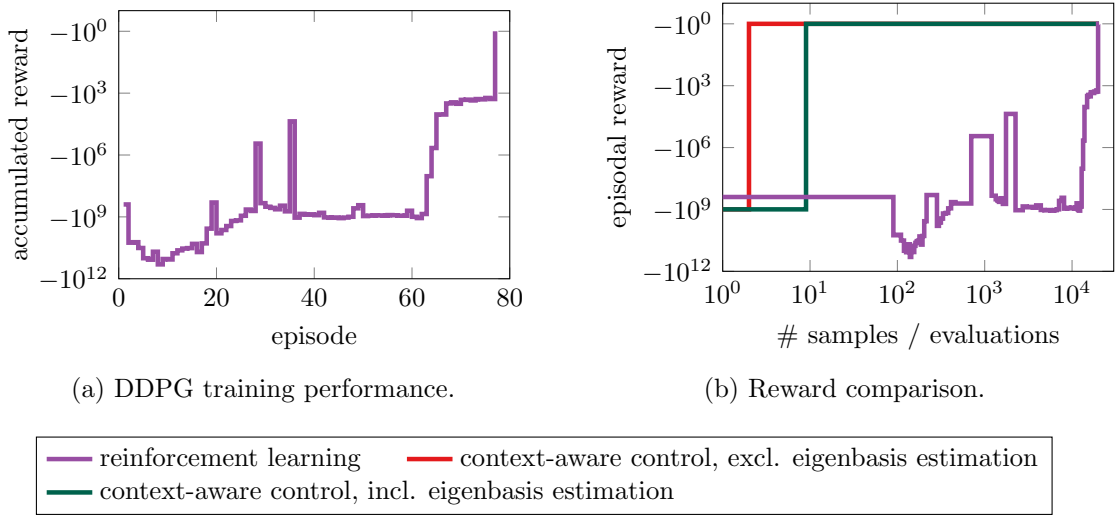

Consider a two-dimensional heat flow describing
the heating process in a rectangular domain affected by disturbances.
We use model \textsf{HF2D5} introduced in~\cite{Lei04} to describe the
underlying system.
The model is continuous in time with $\nh = 4\,489$ states
and $\np = 2$ inputs.
A discrete-time version of the model is obtained with the implicit Euler
discretization with sampling time $\tau = 0.1$. 
Both, the continuous- and discrete-time models have a single
unstable eigenvalue, $\neigr = 1$, which excites the unstable system dynamics.
To learn a basis of the unstable dynamics, we use $7$ gradient samples in the
discrete-time case.
For the continuous-time system, we use the approach described in
\Cref{subsubsec:krylov} with a shift in the right
open half-plane and GMRES for solving~\cref{eqn:linsolve} without a
preconditioner.
We need $192$ samples of the gradient to estimate the left eigenbasis.
Due to the homogeneous initial conditions, context-aware controller inference
(\Cref{alg:partstab}) needs $\dT = \neigr + 1 = 2$ samples to derive a
controller.

Four output measurements of the system are shown in \Cref{fig:hf2d5}.
Context-aware controller inference smoothly stabilize the dynamics in the
discrete- and continuous-time case.
As comparison, we train a controller via reinforcement
learning using DDPG for the discrete-time case.
The training performance is shown in
\Cref{fig:hf2d5_train_a}.
Reinforcement learning needs $20\,365$ state observations in $77$ episodes to
learn a controller that satisfies the stabilization test.
Note that the implemented test does not yield a guarantee for the constructed
controller to be stabilizing, which is in contrast to our proposed method that
yields a stabilization guarantee.
The number of state observations needed by DDPG is nearly $5$ times the
state-space dimension of the model of the system as well as $2\,260$ times
the number of observations needed by context-aware controller inference,
including the estimation of the eigenbasis.
This can be seen in \Cref{fig:hf2d5_train_b} that shows
the reward~\cref{eqn:reward} of context-aware controller.
The DDPG-learned controller also stabilizes the test simulation as shown
in \Cref{fig:hf2d5_e}.
The two inferred controllers provide very similar closed-loop simulation
behaviors in \Cref{fig:hf2d5_c,fig:hf2d5_d} due to the stabilization of single
real unstable eigenvalues while preserving the purely real eigenvalue structure
of the problem.
On the other hand, the DDPG-learned controller acts on the full spectrum
and introduces complex conjugate eigenvalues that dominate the dynamics leading
to an oscillatory behavior before converging to its long term steady state
behavior.
The DDPG controller performs also more aggressively than the inferred ones
resulting in overshooting of the trajectories when compared to the steady state.


\subsubsection{Brazilian interconnected power system}%
\label{subsubsec:bips}

\begin{figure}[t]
  \centering
  \begin{subfigure}[b]{.55\textwidth}
    \resizebox{\textwidth}{!}{%
  \tikzexternalenable%
  \tikzsetnextfilename{brazilianpowersystem}%
  \input{graphics/brazilianpowersystem.tikz}%
  \tikzexternaldisable%
}
    
    \caption{Sketch of the Brazilian interconnected power
      system~\cite{GENI, LosMRetal13}.}
    \label{fig:bips_dt_a}
  \end{subfigure}%
  \hfill%
  \begin{subfigure}[b]{.43\textwidth}
    \raggedleft

  \tikzexternalenable%
  \tikzsetnextfilename{bips_dt_sim_nofb}%
  \begin{tikzpicture}[font = \plotfontsize]
  \pgfplotstableread{graphics/data/bips_dt_sim_nofb.dat}\tableSIM
  
  \begin{axis}[%
    name   = states,
    width  = .725\textwidth,
    height = .1\textheight,
    scale only axis,
    xmin = 0,
    xmax = 100,
    ymin = -8,
    ymax = 45,
    xminorticks = false,
    yminorticks = false,
    xlabel = {time $\tau \cdot t$},
    ylabel = {output $y(t)$},
    ylabel style   = {yshift = -.3em},
    scaled x ticks = false,
    x tick label style = {/pgf/number format/1000 sep={\,}},
    y tick label style = {/pgf/number format/1000 sep={\,}},
    cycle list name    = stateslist
  ]
  
    \addplot+ table[x index = 0, y index = 5] {\tableSIM};
    \addplot+ table[x index = 0, y index = 6] {\tableSIM};
    \addplot+ table[x index = 0, y index = 9] {\tableSIM};
  \end{axis}
\end{tikzpicture}%
  \tikzexternaldisable%
\\
    \caption{DT: no controller.}
    \label{fig:bips_dt_b}
  \begin{subfigure}[b]{.43\textwidth}
  \end{subfigure}
    
    \raggedleft

  \tikzexternalenable%
  \tikzsetnextfilename{bips_dt_sim_inferfb}%
  \begin{tikzpicture}[font = \plotfontsize]
  \pgfplotstableread{graphics/data/bips_dt_sim_inferfb.dat}\tableSIM
  
  \begin{axis}[%
    name   = states,
    width  = .725\textwidth,
    height = .1\textheight,
    scale only axis,
    xmin = 0,
    xmax = 100,
    ymin = -20,
    ymax = 30,
    xminorticks = false,
    yminorticks = false,
    xlabel = {time $\tau \cdot t$},
    ylabel = {output $y(t)$},
    ylabel style   = {yshift = -.3em},
    scaled x ticks = false,
    x tick label style = {/pgf/number format/1000 sep={\,}},
    y tick label style = {/pgf/number format/1000 sep={\,}},
    cycle list name    = stateslist
  ]
  
    \addplot+ table[x index = 0, y index = 5] {\tableSIM};
    \addplot+ table[x index = 0, y index = 6] {\tableSIM};
    \addplot+ table[x index = 0, y index = 9] {\tableSIM};
  \end{axis}
\end{tikzpicture}%
  \tikzexternaldisable%
\\
    \caption{DT: context-aware controller.}
    \label{fig:bips_dt_c}
  \end{subfigure}
  
  \vspace{.5\baselineskip}
  \tikzexternalenable%
  \tikzsetnextfilename{bips_legend}%
  \begin{tikzpicture}[font = \plotfontsize]
  \begin{axis}[%
    hide axis,
    width  = .7\textwidth,
    height = .25\textwidth,
    scale only axis,
    xmin = 0,
    xmax = 10,
    ymin = 0.5,
    ymax = 1.5,
    legend columns = 5, 
    legend style = {
      at     = {(0,0)},
      anchor = center,
      /tikz/every even column/.append style = {column sep = 0.5cm}}
  ]
    
    \pgfplotsset{cycle list name = stateslist}
    \pgfplotsinvokeforeach{1, 2, 3}{\addplot coordinates {(0,0)};}
    \addlegendentry{output $y_{1}$}
    \addlegendentry{output $y_{2}$}
    \addlegendentry{output $y_{3}$}
  \end{axis}
\end{tikzpicture}%
  \tikzexternaldisable%

  \vspace{0\baselineskip}
  
  \caption{Power system:
    The growth of output $y_{3}$ indicates an instability of the system. 
    The proposed context-aware controller inference approach stabilizes the
    system leading to convergence of $y_{3}$ to 0.
    Only two state observations are necessary to infer a controller if a basis
    of the unstable dynamics is available.}
  \label{fig:bips_dt}
\end{figure}

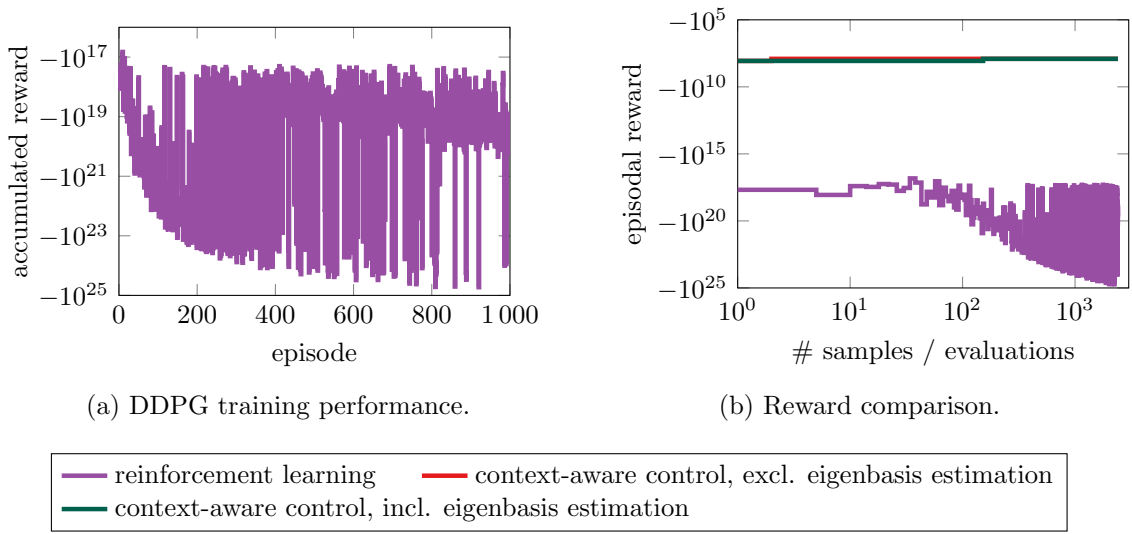
\begin{figure}[t]
  \centering%
  \begin{subfigure}[b]{.49\linewidth}
    \raggedleft
  \tikzexternalenable%
  \tikzsetnextfilename{bips_training}%
  \begin{tikzpicture}[font = \plotfontsize]
  \pgfplotstableread{graphics/data/bips_dt_learnfb_train.dat}\tableTRAIN
  \pgfplotstablecreatecol[create col/expr={-log10(-\thisrow{1})}]{}\tableTRAIN
  
  \begin{axis}[%
    width  = .7\textwidth,
    height = .15\textheight,
    scale only axis,
    xmin = 0,
    xmax = 1000,
    ymode = log,
    ymin = 1e+16,
    ymax = 1e+25,
    y dir       = reverse,
    ytick       = {1e+17, 1e+19, 1e+21, 1e+23, 1e+25},
    yticklabels = {$-10^{17}$, $-10^{19}$, $-10^{21}$, $-10^{23}$, $-10^{25}$},
    xminorticks = false,
    yminorticks = false,
    xlabel = {episode},
    ylabel = {accumulated reward},
    ylabel style   = {yshift = -.3em},
    scaled x ticks = false,
    x tick label style = {/pgf/number format/1000 sep={\,}},
    y tick label style = {/pgf/number format/1000 sep={\,}},
    cycle list name    = trainlist,
    cycle list shift   = 2
  ]
  
    \addplot+[const plot] table[x index = 0, y expr = -\thisrowno{1}]
      {\tableTRAIN};
  \end{axis}
\end{tikzpicture}%
  \tikzexternaldisable%

    \caption{DDPG training performance.}
    \label{fig:bips_train_a}
  \end{subfigure}%
  \hfill%
  \begin{subfigure}[b]{.49\linewidth}
    \raggedleft
  \tikzexternalenable%
  \tikzsetnextfilename{bips_rewards}%
  \begin{tikzpicture}[font = \plotfontsize]
  \pgfplotstableread{graphics/data/bips_dt_rewards.dat}\tableREW
  
  \begin{loglogaxis}[%
    width  = .7\textwidth,
    height = .15\textheight,
    scale only axis,
    xmin = 1,
    xmax = 3e+3,
    ymin = 1e+5,
    ymax = 1e+25,
    y dir       = reverse,
    ytick       = {1e+5, 1e+10, 1e+15, 1e+20, 1e+25},
    yticklabels = {$-10^{5}$, $-10^{10}$, $-10^{15}$, $-10^{20}$, $-10^{25}$},
    xminorticks = false,
    yminorticks = false,
    xlabel = {\# samples / evaluations},
    ylabel = {episodal reward},
    ylabel style   = {yshift = -.3em},
    scaled x ticks = false,
    x tick label style = {/pgf/number format/1000 sep={\,}},
    y tick label style = {/pgf/number format/1000 sep={\,}},
    cycle list name    = trainlist
  ]
  
    \addplot+[const plot] table[x index = 0, y expr = -\thisrowno{1}] {\tableREW};
    \addplot+[const plot] table[x index = 0, y expr = -\thisrowno{2}] {\tableREW};
    \addplot+[const plot] table[x index = 0, y expr = -\thisrowno{3}] {\tableREW};
  \end{loglogaxis}
\end{tikzpicture}%
  \tikzexternaldisable%

    \caption{Reward comparison.}
    \label{fig:bips_train_b}
  \end{subfigure}
  
  \vspace{.75\baselineskip}
  \tikzexternalenable%
  \tikzsetnextfilename{bips_train_legend}%
  \begin{tikzpicture}[font = \plotfontsize]
  \begin{axis}[%
    hide axis,
    width  = 1cm,
    height = 1cm,
    scale only axis,
    xmin = 0,
    xmax = 10,
    ymin = 0.5,
    ymax = 1.5,
    legend columns    = 2,
    legend cell align = {left},
    legend style      = {
      at     = {(0,0)},
      anchor = center,
      /tikz/every even column/.append style = {column sep = 0.5cm}}
  ]
    
    \pgfplotsset{cycle list name = trainlist, cycle list shift = 2}
    \pgfplotsinvokeforeach{1, 2, 3}{\addplot coordinates {(0,0)};}
    \addlegendentry{reinforcement learning}
    \addlegendentry{context-aware control, excl. eigenbasis estimation}
    \addlegendentry{\makebox[0pt][l]{context-aware control, incl.
      eigenbasis estimation}}
  \end{axis}
\end{tikzpicture}%
  \tikzexternaldisable%

  \vspace{0\baselineskip}
  
  \caption{Power system:
    Reinforcement learning with DDPG runs for $1\,000$ episodes but in each
    episode at most five time steps can be taken before the system dynamics
    become too unstable and lead to numerical issues, which ultimately means
    that no stabilizing controller is found with DDPG.
    In contrast, context-aware controller inference stabilizes the system after
    only two state observations if an estimate of a basis of the unstable
    dynamics is given and $152$ samples if additionally the basis has to
    be estimated.}
  \label{fig:bips_train}
\end{figure}

We now consider a system that describes the Brazilian interconnected power
system under heavy load condition.
A sketch is shown in \Cref{fig:bips_dt_a}.
The network consists of generators and power plants, consumer clusters and
industrial loads, and transmission lines, and its internal energy is controlled
by reference voltage excitation; see, e.g.,~\cite{LosMRetal13,FreRM08}.
Instabilities in the system occur by deviations from the considered heavy load
condition, e.g., by disturbances, and result in an increase of the internal
network energy, leading to shorts and blackouts.
We consider here the \textsf{bips98\_606} example
from~\cite{morwiki_powersystems}.
The model has $\nh = 7\,135$ states, described by ordinary differential
equations and algebraic constraints, and $\np = 4$ inputs.
Due to the algebraic constraints, we only consider the
discrete-time case, which is obtained with implicit Euler and sampling time
$\tau = 0.1$.
The model has $\neigr = 1$ unstable eigenvalues, for which we learn the
corresponding left eigenvector as basis of the unstable dynamics from
$150$ gradient samples.

Trajectories of three output measurements up to time $100$ are shown in
\Cref{fig:bips_dt_b}.
The first two outputs represent the voltage at two locations in the network.
Those two outputs do not yet indicate an instability.
However, the average of the total network energy, which is given by the third
output, increases rapidly over time in \Cref{fig:bips_dt_b}
and so indicates the accumulation of internal energy.
From only $\dT = 2$ data samples of the system, we construct a stabilizing
controller via context-aware inference.
Outputs of the system with the inferred controller are shown in
\Cref{fig:bips_dt_c} and demonstrate that the controlled
system dampens the internal energy (output $y_{3}$).

We also attempted to stabilize the system with DDPG.
However, even after manual parameter tuning and $1\,000$ episodes, we only
obtained controllers that destabilized the system.
The destabilization with DDPG happened so quickly that typically only
$2$--$5$ time steps per episode were sampled, which shows that sampling data of
unstable systems is challenging because after only a short time the instability
leads to uninformative data collection.
The training performance of DDPG is compared to controller inference in
\Cref{fig:bips_train}.


\subsection{Experiments with nonlinear system dynamics}%
\label{subsec:nlexamples}


\subsubsection{Tubular reactor}%
\label{subsubsec:tubularreactor}

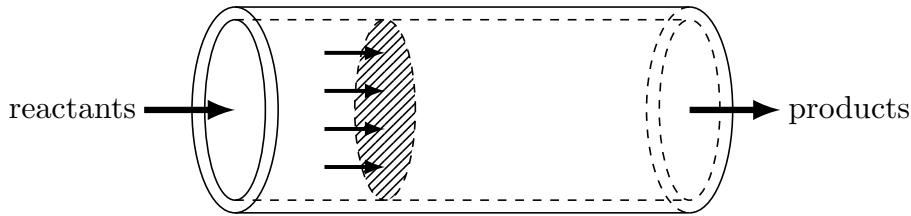
\begin{figure}[t]
  \centering
  \resizebox{.8\textwidth}{!}{%
  \tikzexternalenable%
  \tikzsetnextfilename{tubularreactor}%
%
%
%

\begin{tikzpicture}[
  x         = 2.845em,
  y         = 2.845em,
  inner sep = 0,
  outer sep = 0,
  draw      = black]
  
  \tikzstyle{outerwall} = [
    line width = .05em
  ]
  \tikzstyle{innerwall} = [
    outerwall,
    dashed
  ]
  \makeatletter
  \tikzset{
    hatch distance/.store in = \hatchdistance,
    hatch distance = .3em,
    hatch thickness/.store in = \hatchthickness,
    hatch thickness = 0.08
  }
  \pgfdeclarepatternformonly[\hatchdistance,\hatchthickness]{north east hatch}
    {\pgfpoint{-3em}{-3em}}
    {\pgfpoint{\hatchdistance+.5*\hatchthickness}%
	    {\hatchdistance+.5*\hatchthickness}}
    {\pgfpoint{\hatchdistance}{\hatchdistance}}
    {\pgfsetcolor{\tikz@pattern@color}
    	\pgfsetlinewidth{\hatchthickness}
    	\pgfpathmoveto{\pgfpoint{0em}{0em}}
    	\pgfpathlineto{\pgfpoint{\hatchdistance}{\hatchdistance}}
    	\pgfusepath{stroke}}
  \makeatother
  
  \node(react) {reactants\vphantom{Pp}};
  \draw[-latex, line width = .2em]
    ($(react.east)+(.25em,0)$) -- ++(1, 0)
    coordinate (lhs);
  \coordinate (rhs) at ($(lhs)+(5,0)$);
  
  \draw[outerwall] (lhs) ellipse (0.9483em and 2.845em);
  \draw[outerwall] (lhs) ellipse (1.3483em and 3.245em);
  
  \draw[outerwall] ($(lhs)+(0,-3.245em)$) -- ++(5, 0);
  \draw[outerwall] ($(lhs)+(0,3.245em)$) -- ++(5, 0);
  
  \draw[innerwall] ($(lhs)+(0,-2.845em)$) -- ++(5, 0);
  \draw[innerwall] ($(lhs)+(0,2.845em)$) -- ++(5, 0);
  
  \draw[innerwall] (rhs) ellipse (0.9483em and 2.845em);
  \draw[innerwall] ($(lhs)+(5, 3.245em)$)
    arc (90:270:1.3483em and 3.245em);
  \draw[outerwall] ($(lhs)+(5, 3.245em)$)
    arc (90:-90:1.3483em and 3.245em);
    
  \draw[innerwall, pattern = north east hatch, hatch distance = .28em, hatch thickness = .05em]
    ($(lhs)+(1.65,0)$)
    coordinate (mdl)
    ellipse (0.9483em and 2.845em);
    
  \draw[-latex, line width = .125em] ($(mdl)+(-1.8966em,.6em)$)
    -- ($(mdl)+(0,.6em)$);
  \draw[-latex, line width = .125em] ($(mdl)+(-1.8966em,1.8em)$)
    -- ($(mdl)+(0,1.8em)$);
  \draw[-latex, line width = .125em] ($(mdl)+(-1.8966em,-.6em)$)
    -- ($(mdl)+(0,-.6em)$);
  \draw[-latex, line width = .125em] ($(mdl)+(-1.8966em,-1.8em)$)
    -- ($(mdl)+(0,-1.8em)$);
    
  \draw[-latex, line width = .2em]
    (rhs) -- ++(1, 0)
    coordinate (tmp);
  \node(prod) [right = .25em of tmp] {products\vphantom{Pp}};
\end{tikzpicture} %
  \tikzexternaldisable%
}
  
  \caption{Tubular reactor: Schematic representation of a tubular reactor.
    Reactants are injected, react inside the reactor, and the products are
    returned.
    The control goal is dampening temperature oscillations during the reaction.}
  \label{fig:tubularreactorsketch}
\end{figure}

\begin{figure}
  \centering%
  \begin{subfigure}[b]{.49\linewidth}
    \centering
  \tikzexternalenable%
  \tikzsetnextfilename{tubularreactor_ct_sim_nofb}%
  \begin{tikzpicture}[font = \plotfontsize]  
  \begin{axis}[%
    name   = states,
    width  = .725\textwidth,
    height = .135\textheight,
    scale only axis,
    xmin = 0,
    xmax = 30,
    ymin = 0,
    ymax = 1,
    xminorticks = false,
    yminorticks = false,
    xlabel = {time $t$},
    ylabel = {spatial coordinates},
    ylabel style   = {yshift = -.3em},
    scaled x ticks = false,
    x tick label style = {/pgf/number format/1000 sep={\,}},
    y tick label style = {/pgf/number format/1000 sep={\,}}
  ]
  
    \addplot graphics[xmin = 0, xmax = 30, ymin = 0, ymax = 1]
        {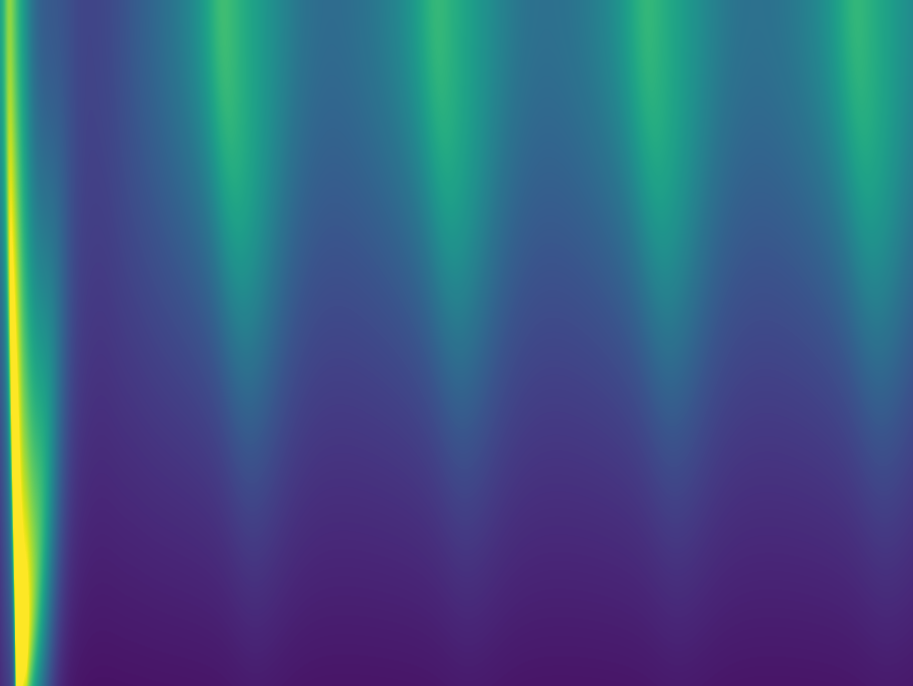};
  \end{axis}
\end{tikzpicture}%
  \tikzexternaldisable%

    \caption{CT: no controller.}
    \label{fig:tubularreactor_a}
  \end{subfigure}%
  \hfill%
  \begin{subfigure}[b]{.49\linewidth}
    \centering
  \tikzexternalenable%
  \tikzsetnextfilename{tubularreactor_dt_sim_nofb}%
  \begin{tikzpicture}[font = \plotfontsize]  
  \begin{axis}[%
    name   = states,
    width  = .725\textwidth,
    height = .135\textheight,
    scale only axis,
    xmin = 0,
    xmax = 30,
    ymin = 0,
    ymax = 1,
    xminorticks = false,
    yminorticks = false,
    xlabel = {time $\tau \cdot t$},
    ylabel = {spatial coordinates},
    ylabel style   = {yshift = -.3em},
    scaled x ticks = false,
    x tick label style = {/pgf/number format/1000 sep={\,}},
    y tick label style = {/pgf/number format/1000 sep={\,}}
  ]
  
    \addplot graphics[xmin = 0, xmax = 30, ymin = 0, ymax = 1]
        {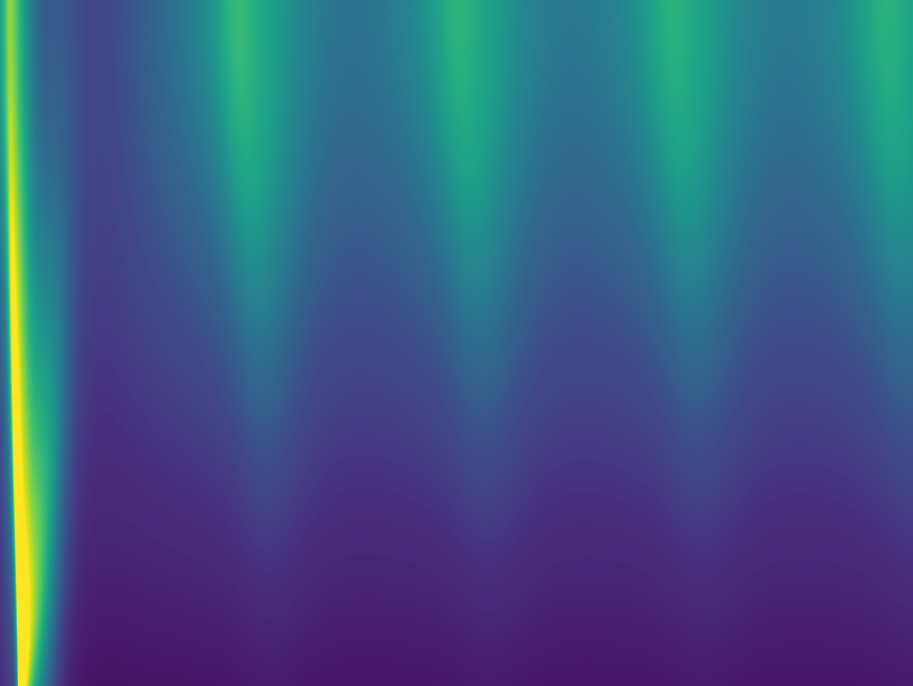};
  \end{axis}
\end{tikzpicture}%
  \tikzexternaldisable%

    \caption{DT: no controller.}
    \label{fig:tubularreactor_b}
  \end{subfigure}
  \vspace{1\baselineskip}
  
  \begin{subfigure}[b]{.49\linewidth}
    \centering
  \tikzexternalenable%
  \tikzsetnextfilename{tubularreactor_ct_sim_inferfb}%
  \begin{tikzpicture}[font = \plotfontsize]  
  \begin{axis}[%
    name   = states,
    width  = .725\textwidth,
    height = .135\textheight,
    scale only axis,
    xmin = 0,
    xmax = 30,
    ymin = 0,
    ymax = 1,
    xminorticks = false,
    yminorticks = false,
    xlabel = {time $t$},
    ylabel = {spatial coordinates},
    ylabel style   = {yshift = -.3em},
    scaled x ticks = false,
    x tick label style = {/pgf/number format/1000 sep={\,}},
    y tick label style = {/pgf/number format/1000 sep={\,}}
  ]
  
    \addplot graphics[xmin = 0, xmax = 30, ymin = 0, ymax = 1]
        {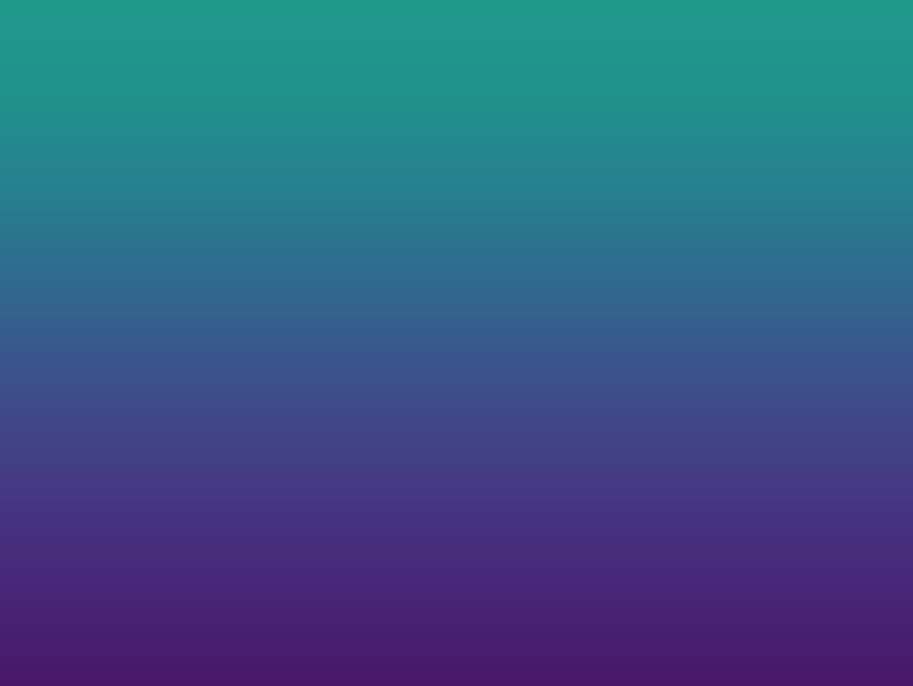};
  \end{axis}
\end{tikzpicture}%
  \tikzexternaldisable%

    \caption{CT: context aware, data from linearized system.}
    \label{fig:tubularreactor_c}
  \end{subfigure}%
  \hfill%
  \begin{subfigure}[b]{.49\linewidth}
    \centering
  \tikzexternalenable%
  \tikzsetnextfilename{tubularreactor_dt_sim_inferfb}%
  \begin{tikzpicture}[font = \plotfontsize]  
  \begin{axis}[%
    name   = states,
    width  = .725\textwidth,
    height = .135\textheight,
    scale only axis,
    xmin = 0,
    xmax = 30,
    ymin = 0,
    ymax = 1,
    xminorticks = false,
    yminorticks = false,
    xlabel = {time $\tau \cdot t$},
    ylabel = {spatial coordinates},
    ylabel style   = {yshift = -.3em},
    scaled x ticks = false,
    x tick label style = {/pgf/number format/1000 sep={\,}},
    y tick label style = {/pgf/number format/1000 sep={\,}}
  ]
  
    \addplot graphics[xmin = 0, xmax = 30, ymin = 0, ymax = 1]
        {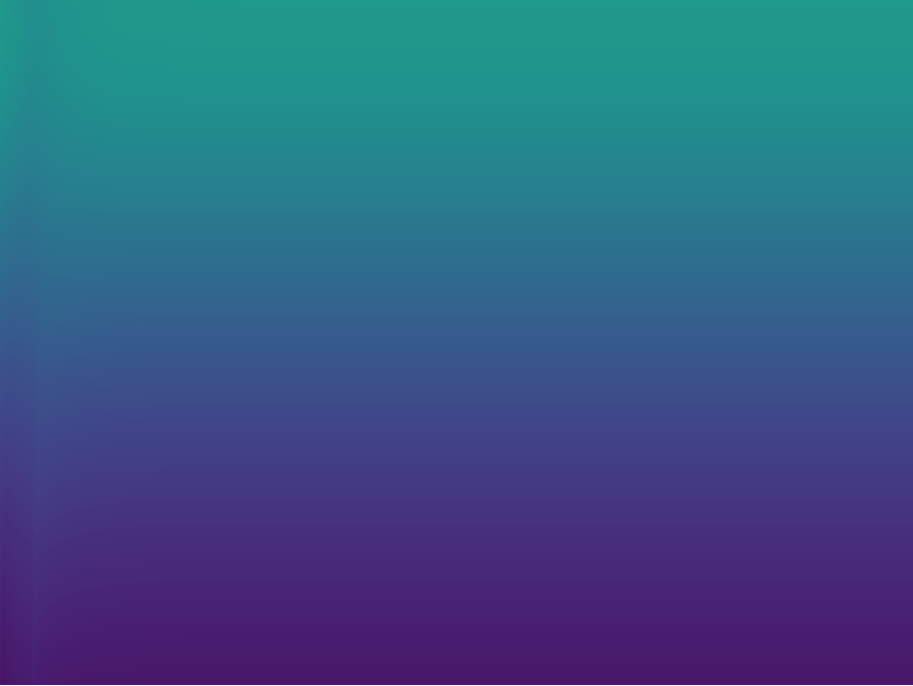};
  \end{axis}
\end{tikzpicture}%
  \tikzexternaldisable%

    \caption{DT: context aware, data from linearized system.}
    \label{fig:tubularreactor_d}
  \end{subfigure}
  \vspace{1\baselineskip}
  
  \begin{subfigure}[b]{.49\linewidth}
    \centering
  \tikzexternalenable%
  \tikzsetnextfilename{tubularreactor_ct_sim_inferfb_nl}%
  \begin{tikzpicture}[font = \plotfontsize]  
  \begin{axis}[%
    name   = states,
    width  = .725\textwidth,
    height = .135\textheight,
    scale only axis,
    xmin = 0,
    xmax = 30,
    ymin = 0,
    ymax = 1,
    xminorticks = false,
    yminorticks = false,
    xlabel = {time $t$},
    ylabel = {spatial coordinates},
    ylabel style   = {yshift = -.3em},
    scaled x ticks = false,
    x tick label style = {/pgf/number format/1000 sep={\,}},
    y tick label style = {/pgf/number format/1000 sep={\,}}
  ]
  
    \addplot graphics[xmin = 0, xmax = 30, ymin = 0, ymax = 1]
        {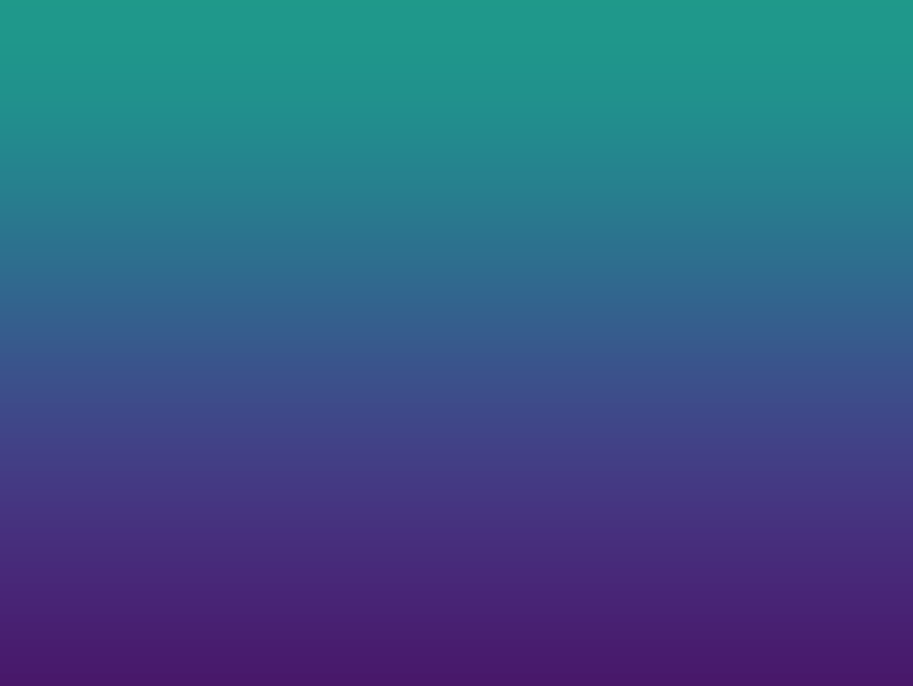};
  \end{axis}
\end{tikzpicture}%
  \tikzexternaldisable%

    \caption{CT: context aware, data from nonlinear system.}
    \label{fig:tubularreactor_e}
  \end{subfigure}%
  \hfill%
  \begin{subfigure}[b]{.49\linewidth}
    \centering
  \tikzexternalenable%
  \tikzsetnextfilename{tubularreactor_dt_sim_inferfb_nl}%
  \begin{tikzpicture}[font = \plotfontsize]  
  \begin{axis}[%
    name   = states,
    width  = .725\textwidth,
    height = .135\textheight,
    scale only axis,
    xmin = 0,
    xmax = 30,
    ymin = 0,
    ymax = 1,
    xminorticks = false,
    yminorticks = false,
    xlabel = {time $\tau \cdot t$},
    ylabel = {spatial coordinates},
    ylabel style   = {yshift = -.3em},
    scaled x ticks = false,
    x tick label style = {/pgf/number format/1000 sep={\,}},
    y tick label style = {/pgf/number format/1000 sep={\,}}
  ]
  
    \addplot graphics[xmin = 0, xmax = 30, ymin = 0, ymax = 1]
        {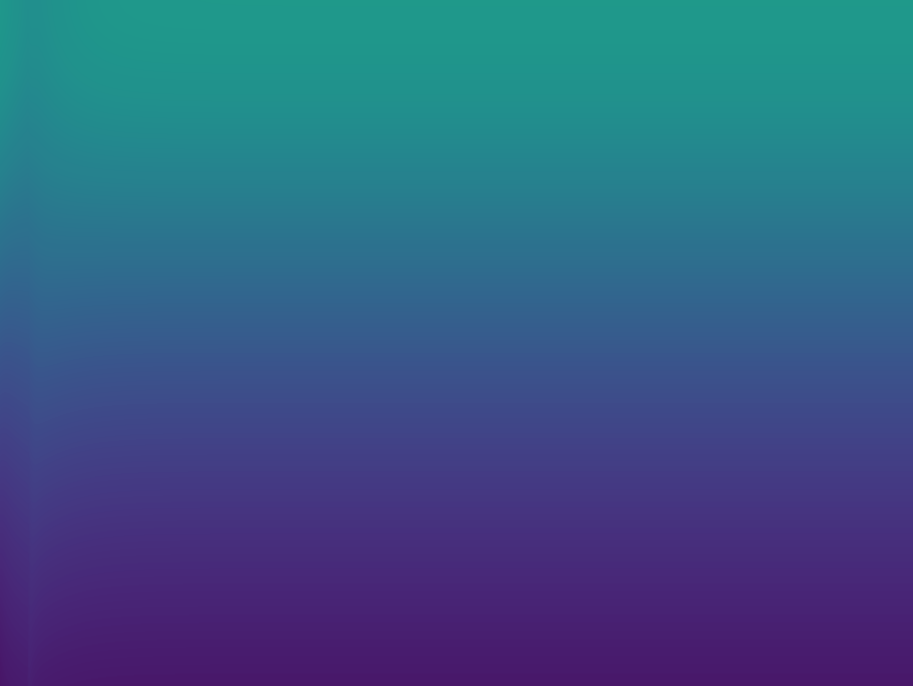};
  \end{axis}
\end{tikzpicture}%
  \tikzexternaldisable%

    \caption{DT: context aware, data from nonlinear system.}
    \label{fig:tubularreactor_f}
  \end{subfigure}

  \vspace{.75\baselineskip}
  \tikzexternalenable%
  \tikzsetnextfilename{tubularreactor_legend}%
  \begin{tikzpicture}
  \node[draw = none, minimum width = 0cm, inner sep = 0cm](start){};
  \node(leg) at (start.north east) [anchor = north west]{\tikz
  \begin{axis}[%
    hide axis,
    scale only axis,
    width  = 10cm,
    height = .1cm,
    point meta min = 1.0,
    point meta max = 1.3,
    colorbar,
    colorbar horizontal,
    colorbar style = {
      at = {(.5, 0)},
      anchor = north},
    scaled x ticks = false,
    x tick label style = {/pgf/number format/fixed}]
  \end{axis};};
  \node[draw = none, minimum width = 0cm, inner sep = 0cm](end)
    at (leg.north east) [anchor = north west]{};
\end{tikzpicture}%
  \tikzexternaldisable%

  \vspace{0\baselineskip}

  \caption{Tubular reactor:
    The proposed context-aware controller inference stabilizes the system
    already after observing one order of magnitude fewer states than necessary
    for stabilizing via the traditional two-step approach.
    Plots (e) and (f) indicate that the proposed approach also works well on
    data directly obtained from the nonlinear systems.}
  \label{fig:tubularreactor}
\end{figure}

\begin{figure}[t]
  \centering%
  \begin{subfigure}[b]{.49\linewidth}
    \raggedleft
  \tikzexternalenable%
  \tikzsetnextfilename{tubularreactor_training}%
  \begin{tikzpicture}[font = \plotfontsize]
  \pgfplotstableread{graphics/data/tubularreactor_dt_learnfb_train.dat}\tableTRAIN
  \pgfplotstablecreatecol[create col/expr={-log10(-\thisrow{1})}]{}\tableTRAIN
  
  \begin{axis}[%
    width  = .7\textwidth,
    height = .15\textheight,
    scale only axis,
    xmin = 0,
    xmax = 1000,
    ymode = log,
    ymin = 1e+10,
    ymax = 1e+19,
    y dir       = reverse,
    ytick       = {1e+10, 1e+13, 1e+16, 1e+19},
    yticklabels = {$-10^{10}$, $-10^{13}$, $-10^{16}$, $-10^{19}$},
    xminorticks = false,
    yminorticks = false,
    xlabel = {episode},
    ylabel = {accumulated reward},
    ylabel style   = {yshift = -.3em},
    scaled x ticks = false,
    x tick label style = {/pgf/number format/1000 sep={\,}},
    y tick label style = {/pgf/number format/1000 sep={\,}},
    cycle list name    = trainlist,
    cycle list shift   = 2
  ]
  
    \addplot+[const plot] table[x index = 0, y expr = -\thisrowno{1}]
      {\tableTRAIN};
  \end{axis}
\end{tikzpicture}%
  \tikzexternaldisable%

    \caption{DDPG training performance.}
    \label{fig:tubularreactor_train_a}
  \end{subfigure}%
  \hfill%
  \begin{subfigure}[b]{.49\linewidth}
    \raggedleft
  \tikzexternalenable%
  \tikzsetnextfilename{tubularreactor_rewards}%
  \begin{tikzpicture}[font = \plotfontsize]
  \pgfplotstableread{graphics/data/tubularreactor_dt_rewards.dat}\tableREW
  
  \begin{loglogaxis}[%
    width  = .7\textwidth,
    height = .15\textheight,
    scale only axis,
    xmin = 1,
    xmax = 4e+3,
    ymin = 1e-1,
    ymax = 1e+20,
    y dir       = reverse,
    ytick       = {1e+0, 1e+5, 1e+10, 1e+15, 1e+20},
    yticklabels = {$-10^{0}$, $-10^{5}$, $-10^{10}$, $-10^{15}$, $-10^{20}$},
    xminorticks = false,
    yminorticks = false,
    xlabel = {\# samples / evaluations},
    ylabel = {episodal reward},
    ylabel style   = {yshift = -.3em},
    scaled x ticks = false,
    x tick label style = {/pgf/number format/1000 sep={\,}},
    y tick label style = {/pgf/number format/1000 sep={\,}},
    cycle list name    = trainlist
  ]
  
    \addplot+[const plot] table[x index = 0, y expr = -\thisrowno{1}] {\tableREW};
    \addplot+[const plot] table[x index = 0, y expr = -\thisrowno{2}] {\tableREW};
    \addplot+[const plot] table[x index = 0, y expr = -\thisrowno{3}] {\tableREW};
  \end{loglogaxis}
\end{tikzpicture}%
  \tikzexternaldisable%

    \caption{Reward comparison.}
    \label{fig:tubularreactor_train_b}
  \end{subfigure}
  
  \vspace{.75\baselineskip}
  \tikzexternalenable%
  \tikzsetnextfilename{tubularreactor_train_legend}%
  \begin{tikzpicture}[font = \plotfontsize]
  \begin{axis}[%
    hide axis,
    width  = 1cm,
    height = 1cm,
    scale only axis,
    xmin = 0,
    xmax = 10,
    ymin = 0.5,
    ymax = 1.5,
    legend columns    = 2,
    legend cell align = {left},
    legend style      = {
      at     = {(0,0)},
      anchor = center,
      /tikz/every even column/.append style = {column sep = 0.5cm}}
  ]
    
    \pgfplotsset{cycle list name = trainlist, cycle list shift = 2}
    \pgfplotsinvokeforeach{1, 2, 3}{\addplot coordinates {(0,0)};}
    \addlegendentry{reinforcement learning}
    \addlegendentry{context-aware control, excl. eigenbasis estimation}
    \addlegendentry{\makebox[0pt][l]{context-aware control, incl.
      eigenbasis estimation}}
  \end{axis}
\end{tikzpicture}%
  \tikzexternaldisable%

  \vspace{0\baselineskip}
  
  \caption{Tubular reactor:
    Reinforcement learning with DDPG fails to find
    a stabilizing controller after $1\,000$ episodes in this example.
    The context-aware approach needs around $260\times$ fewer samples than what
    DDPG uses in $1\,000$  episodes.}
  \label{fig:tubularreactor_train}
\end{figure}
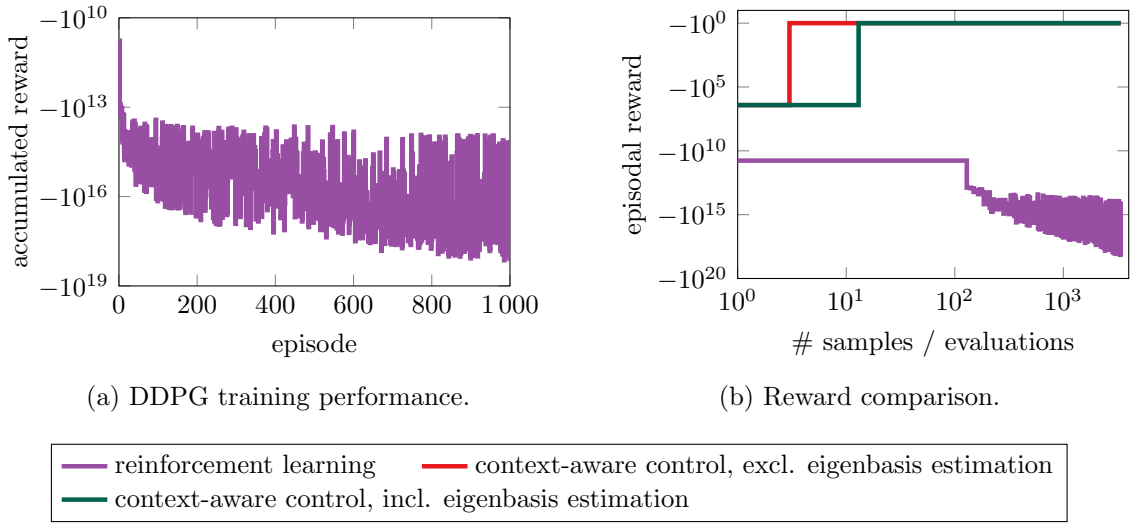

We consider a tubular reactor as shown in
\Cref{fig:tubularreactorsketch} and described in~\cite{KraW19, Zho12}.
The reaction is given by an Arrhenius term that relates temperature and
specimen concentration to describe a single exothermic reaction~\cite{HeiP81}.
The nonlinear model is a one-dimensional discretization of the reaction
equations for \Cref{fig:tubularreactorsketch} with $\nh = 3\,998$ states and
$\np = 2$ control inputs.
The inputs influence the temperature and specimen concentration at
the left end of the tube.
Additionally to the continuous-time model, we consider a discrete-time version
obtained using implicit Euler for the linear part and explicit Euler for the
nonlinear part with the sampling time $\tau = 0.01$.
We note that our approach is applicable with other time-discretization
schemes as well.

Simulations without a controller showing the unstable behavior for the first
half of the states representing temperature are given in
\Cref{fig:tubularreactor_a,fig:tubularreactor_b}.
The models have $\neigr = 2$ unstable eigenvalues, for which
we need $41$ and $10$ gradient samples to estimate a basis of the
unstable dynamics in the continuous and discrete time case, respectively.
The observations in the continuous-time case include the use of GMRES
to solve the occurring shifted systems of linear equations during the eigenvalue
computations.
Inspired by the reaction-diffusion nature of the problem, we use a shifted
matrix resulting from the discretization of a linear diffusion equation as
preconditioner.
In many cases, with knowledge about the underlying application, preconditioners
can be designed without system identification~\cite{PeaP20}.

With the learned bases of the unstable dynamics, we use in continuous and
discrete time $\dT = \neigr + 1 = 3$ data samples to design stabilizing
controllers.
\Cref{fig:tubularreactor_c,fig:tubularreactor_d} show the simulations
with controllers inferred from data of the models of the linearized systems
(``idealized data'').
In contrast, \Cref{fig:tubularreactor_e,fig:tubularreactor_f} show the results
obtained with controllers inferred from data of the original, nonlinear systems,
which are sampled close to the steady state of interest.

We also trained a DDPG agent to stabilize the discrete-time system based on
evaluations of the corresponding linear models.
Even after $1\,000$ episodes, the training did not result in a controller that
satisfied the stabilization criterion.
The training results are shown in \Cref{fig:tubularreactor_train_a}.
In fact, the constructed DDPG controllers often destabilize the system
in the sense of~\cref{eqn:reward} after 3--4 time steps.
In \Cref{fig:tubularreactor_train_b},
we compare the two types of constructed controllers in terms of their
rewards per data samples.
The controller constructed with the proposed method stabilizes the system
with fewer samples than what DDPG requires in the first training episode alone.


\subsubsection{Laminar flow with an obstacle}%
\label{subsubsec:cylinderwake}

\begin{figure}
  \centering
  \begin{subfigure}[b]{\linewidth}
    \raggedleft
  \tikzexternalenable%
  \tikzsetnextfilename{cylinderwake_dt_sim_nofb}%
  \input{graphics/cylinderwake_dt_sim_nofb.tikz}%
  \tikzexternaldisable%

    \tikz\node[minimum size = 1.5cm]{};
    
    \caption{DT: no controller.}
    \label{fig:cylinderwake_dt_a}
  \end{subfigure}
  \vspace{0\baselineskip}

  \begin{subfigure}[b]{\linewidth}
    \raggedleft
  \tikzexternalenable%
  \tikzsetnextfilename{cylinderwake_dt_sim_inferfb}%
  \input{graphics/cylinderwake_dt_sim_inferfb.tikz}%
  \tikzexternaldisable%

    \tikz\node[minimum size = 1.5cm]{};
    
    \caption{DT: context-aware controller obtained from data of the
      linearized system.}
    \label{fig:cylinderwake_dt_b}
  \end{subfigure}
  \vspace{0\baselineskip}

  \begin{subfigure}[b]{\linewidth}
    \raggedleft
  \tikzexternalenable%
  \tikzsetnextfilename{cylinderwake_dt_sim_inferfb_nl}%
  \input{graphics/cylinderwake_dt_sim_inferfb_nl.tikz}%
  \tikzexternaldisable%

    \tikz\node[minimum size = 1.5cm]{};
    
    \caption{DT: context-aware controller obtained from data of the
      nonlinear system.}
    \label{fig:cylinderwake_dt_c}
  \end{subfigure}
  
  \vspace{.5\baselineskip}
  \tikzexternalenable%
  \tikzsetnextfilename{cylinderwake_legend}%
  \begin{tikzpicture}[font = \plotfontsize]
  \begin{axis}[%
    hide axis,
    width  = .75\textwidth,
    height = .1\textheight,
    scale only axis,
    xmin = 0,
    xmax = 10,
    ymin = 0.5,
    ymax = 1.5,
    legend columns = 5, 
    legend style = {
      at     = {(0,0)},
      anchor = center,
      /tikz/every even column/.append style = {column sep = 0.5cm}}
  ]
    
    \pgfplotsset{cycle list name = stateslist}
    \pgfplotsinvokeforeach{1, 2, 3, 4}{\addplot coordinates {(0,0)};}
    \addlegendentry{output $y_{1}$}
    \addlegendentry{output $y_{3}$}
    \addlegendentry{output $y_{5}$}
    \addlegendentry{output $y_{7}$}
  \end{axis}
\end{tikzpicture}%
  \tikzexternaldisable%

  \vspace{0\baselineskip}
  
  \caption{Flow behind a cylinder:
    The proposed context-aware controller inference stabilizes the system with
    state observations obtained from the linearized and nonlinear systems.
    The outputs near the disturbance initiating the unstable behavior are less
    oscillatory in the case of using data from the nonlinear system in this
    experiment.}
  \label{fig:cylinderwake_dt}
\end{figure}

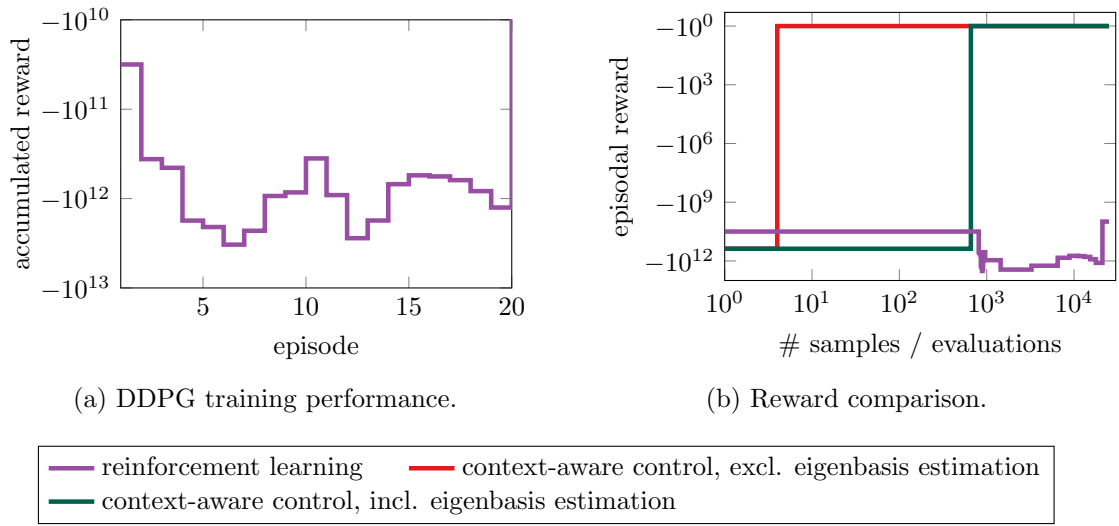
\begin{figure}[t]
  \centering%
  \begin{subfigure}[b]{.49\linewidth}
    \raggedleft
  \tikzexternalenable%
  \tikzsetnextfilename{cylinderwake_training}%
  \begin{tikzpicture}[font = \plotfontsize]
  \pgfplotstableread{graphics/data/cylinderwake_dt_learnfb_train.dat}\tableTRAIN
  \pgfplotstablecreatecol[create col/expr={-log10(-\thisrow{1})}]{}\tableTRAIN
  
  \begin{axis}[%
    width  = .7\textwidth,
    height = .15\textheight,
    scale only axis,
    xmin = 1,
    xmax = 20,
    ymode = log,
    ymin = 1e+10,
    ymax = 1e+13,
    y dir       = reverse,
    ytick       = {1e+10, 1e+11, 1e+12, 1e+13},
    yticklabels = {$-10^{10}$, $-10^{11}$, $-10^{12}$, $-10^{13}$},
    xminorticks = false,
    yminorticks = false,
    xlabel = {episode},
    ylabel = {accumulated reward},
    ylabel style   = {yshift = -.3em},
    scaled x ticks = false,
    x tick label style = {/pgf/number format/1000 sep={\,}},
    y tick label style = {/pgf/number format/1000 sep={\,}},
    cycle list name    = trainlist,
    cycle list shift   = 2
  ]
  
    \addplot+[const plot] table[x index = 0, y expr = -\thisrowno{1}]
      {\tableTRAIN};
  \end{axis}
\end{tikzpicture}%
  \tikzexternaldisable%

    \caption{DDPG training performance.}
    \label{fig:cylinderwake_train_a}
  \end{subfigure}%
  \hfill%
  \begin{subfigure}[b]{.49\linewidth}
    \raggedleft
  \tikzexternalenable%
  \tikzsetnextfilename{cylinderwake_rewards}%
  \begin{tikzpicture}[font = \plotfontsize]
  \pgfplotstableread{graphics/data/cylinderwake_dt_rewards.dat}\tableREW
  
  \begin{loglogaxis}[%
    width  = .7\textwidth,
    height = .15\textheight,
    scale only axis,
    xmin = 1,
    xmax = 3e+4,
    ymin = 2e-1,
    ymax = 1e+13,
    y dir       = reverse,
    ytick       = {1e+0, 1e+3, 1e+6, 1e+9, 1e+12},
    yticklabels = {$-10^{0}$, $-10^{3}$, $-10^{6}$, $-10^{9}$, $-10^{12}$},
    xminorticks = false,
    yminorticks = false,
    xlabel = {\# samples / evaluations},
    ylabel = {episodal reward},
    ylabel style   = {yshift = -.3em},
    scaled x ticks = false,
    x tick label style = {/pgf/number format/1000 sep={\,}},
    y tick label style = {/pgf/number format/1000 sep={\,}},
    cycle list name    = trainlist
  ]
  
    \addplot+[const plot] table[x index = 0, y expr = -\thisrowno{1}] {\tableREW};
    \addplot+[const plot] table[x index = 0, y expr = -\thisrowno{2}] {\tableREW};
    \addplot+[const plot] table[x index = 0, y expr = -\thisrowno{3}] {\tableREW};
  \end{loglogaxis}
\end{tikzpicture}%
  \tikzexternaldisable%

    \caption{Reward comparison.}
    \label{fig:cylinderwake_train_b}
  \end{subfigure}
  
  \vspace{.75\baselineskip}
  \tikzexternalenable%
  \tikzsetnextfilename{cylinderwake_train_legend}%
  \begin{tikzpicture}[font = \plotfontsize]
  \begin{axis}[%
    hide axis,
    width  = 1cm,
    height = 1cm,
    scale only axis,
    xmin = 0,
    xmax = 10,
    ymin = 0.5,
    ymax = 1.5,
    legend columns    = 2,
    legend cell align = {left},
    legend style      = {
      at     = {(0,0)},
      anchor = center,
      /tikz/every even column/.append style = {column sep = 0.5cm}}
  ]
    
    \pgfplotsset{cycle list name = trainlist, cycle list shift = 2}
    \pgfplotsinvokeforeach{1, 2, 3}{\addplot coordinates {(0,0)};}
    \addlegendentry{reinforcement learning}
    \addlegendentry{context-aware control, excl. eigenbasis estimation}
    \addlegendentry{\makebox[0pt][l]{context-aware control, incl.
      eigenbasis estimation}}
  \end{axis}
\end{tikzpicture}%
  \tikzexternaldisable%

  \vspace{0\baselineskip}
  
  \caption{Flow behind a cylinder:
    We restrict the reinforcement learning to only $20$ training episodes
    because obtaining samples from the linearized model is computationally
    expensive.
    The proposed approach requires around $38\times$ fewer data samples than
    needed in the very first DDPG episode to estimate a basis of the unstable
    dynamics and to infer a stabilizing controller in this example.}
  \label{fig:cylinderwake_train}
\end{figure}

We consider a laminar flow behind an obstacle.
The flow changes from its smooth steady state to vortex shedding under
arbitrarily small disturbances.
Snapshots of the flow velocity at end time are shown in
\Cref{fig:cylinderwake_dt}.
The flow dynamics are described by the Navier-Stokes equations.
The model we consider is a system of differential-algebraic equations with
$\nh = 22\,060$ states and $\np = 6$ control inputs, which allow to change the
flow velocities in a small region right behind the obstacle.
The matrices of the continuous-time model can be obtained from~\cite{BehBH17}.
Due to the presence of algebraic constraints in the model, we only consider the
discrete-time case, for which we obtain data with
implicit Euler for the linear terms and explicit Euler for the
quadratic terms with sampling time $\tau = 0.0025$.
The model has $\neigr = 2$ excitable unstable eigenvalues.

We need $655$ observations of the gradient to learn a basis of the
unstable dynamics.
Once a basis is obtained, we need $\dT = \neigr + 2 = 4$ state observations to
learn a stabilizing controller with the proposed approach.
\Cref{fig:cylinderwake_dt_b,fig:cylinderwake_dt_c} show the simulations
using controllers based on data from the linearized system and the
nonlinear system, respectively.
In both cases, the flow is smoothly stabilized towards the steady state.

For the DDPG agent, we stopped its training after only $20$ episodes due to high
computational costs resulting from the expensive evaluation of the linear model.
However, $20$ episodes are enough to demonstrate the superiority of the
proposed method in terms of data requirements as shown in
\Cref{fig:cylinderwake_train}.
The final DDPG-controller is not stabilizing the system.


\section{Conclusions}%
\label{sec:conclusions}

Building on the concept of context-aware learning~\cite{Peh19, WerP22,AlsP20}, we have
shown that for the task of finding a stabilizing controller, it is sufficient to
learn only the unstable dynamics of systems, in contrast to learning models of
the complete---stable and unstable---dynamics.
By combining the task of stabilization with learning system dynamics in a
context-aware fashion, we showed that there exist $\neigr$ states that are
sufficient to be observed for finding a guaranteed stabilizing controller, where
$\neigr$ is the dimension of the space that spans the unstable dynamics.
The dimension $\neigr$ is typically orders of magnitude lower than the dimension
of the state space of all dynamics.
These findings are leveraged by the proposed computational procedure that
estimates bases of spaces of unstable dynamics via samples from gradients
and that constructs stabilizing controllers from up to
$2\,000\times$ fewer state observations than traditional data-driven
control methods and variants of reinforcement learning.
This enables data-driven stabilization in applications with scarce data, such as
near rare events and when data generation is expensive.
In contrast to the reinforcement learning method used in the numerical
comparisons in this work, the proposed context-aware learning method certifies
the stabilization of the underlying system by the developed theory such that the
proposed context-aware controller inference provides trust for making
high-consequence decisions.
There are several avenues for future research.
One of them is combining context-aware learning with methods for constructing
manifolds of unstable dynamics~\cite{KraODetal05, ZieDG19}.
Working with manifolds, instead of spaces, can potentially be beneficial when
systems are strongly nonlinear.
At the same time, the nonlinear approximations introduced by manifolds can be
computationally challenging, especially for systems with high-dimensional state
spaces.


\section*{Acknowledgments}%
\addcontentsline{toc}{section}{Acknowledgments}

The authors acknowledge support from the Air Force Office of Scientific
Research (AFOSR) award FA9550-21-1-0222 (Dr. Fariba Fahroo).
The second author additionally acknowledges support from the National Science
Foundation under Grant No.~2012250.


\addcontentsline{toc}{section}{References}
\bibliographystyle{plainurl}
\bibliography{bibtex/myref}

\begin{thebibliography}{10}

\bibitem{AlsP20}
T.~Alsup and B.~Peherstorfer.
\newblock Context-aware surrogate modeling for balancing approximation and
  sampling costs in multi-fidelity importance sampling and {B}ayesian inverse
  problems.
\newblock e-print 2010.11708, arXiv, 2020.
\newblock Numerical Analysis (math.NA), accepted for publication in SIAM-ASA J.
  Uncertain. Quantif.
\newblock \href {https://doi.org/10.48550/arXiv.2010.11708}
  {\path{doi:10.48550/arXiv.2010.11708}}.

\bibitem{AntGI16}
A.~C. Antoulas, I.~V. Gosea, and A.~C. Ionita.
\newblock Model reduction of bilinear systems in the {L}oewner framework.
\newblock {\em {SIAM} J. Sci. Comput.}, 38(5):B889--B916, 2016.
\newblock \href {https://doi.org/10.1137/15M1041432}
  {\path{doi:10.1137/15M1041432}}.

\bibitem{BehBH17}
M.~Behr, P.~Benner, and J.~Heiland.
\newblock Example setups of {N}avier-{S}tokes equations with control and
  observation: {S}patial discretization and representation via linear-quadratic
  matrix coefficients.
\newblock e-print arXiv:1707.08711, arXiv, 2017.
\newblock Mathematical Software (cs.MS).
\newblock \href {https://doi.org/10.48550/arXiv.1707.08711}
  {\path{doi:10.48550/arXiv.1707.08711}}.

\bibitem{BenCQ01}
P.~Benner, M.~Castillo, and E.~S. Quintana-Ort{\'\i}.
\newblock Partial stabilization of large-scale discrete-time linear control
  systems.
\newblock In T.~M. Pinkston, editor, {\em Proceedings International Conference
  on Parallel Processing Workshops, 3-7 September 2001, Valencia, Spain}, pages
  93--98, 2001.
\newblock \href {https://doi.org/10.1109/ICPPW.2001.951856}
  {\path{doi:10.1109/ICPPW.2001.951856}}.

\bibitem{BenCQetal00}
P.~Benner, M.~Castillo, E.~S. Quintana-Ort{\'\i}, and V.~Hern{\'a}ndez.
\newblock Parallel partial stabilizing algorithms for large linear control
  systems.
\newblock {\em J. Supercomput.}, 15(2):193--206, 2000.
\newblock \href {https://doi.org/10.1023/A:1008108004247}
  {\path{doi:10.1023/A:1008108004247}}.

\bibitem{BraFHetal18}
J.~Bradbury, R.~Frostig, P.~Hawkins, M.~J. Johnson, C.~Leary, D.~Maclaurin,
  G.~Necula, A.~Paszke, J.~VanderPlas, S.~Wanderman-Milne, and Q.~Zhang.
\newblock {JAX}: composable transformations of {P}ython+{N}um{P}y programs
  (version 0.3.13), May 2022.
\newblock URL: \url{https://github.com/google/jax}.

\bibitem{BruK19}
S.~L. Brunton and J.~N. Kutz.
\newblock {\em Data-Driven Science and Engineering: {M}achine Learning,
  Dynamical Systems, and Control}.
\newblock Cambridge University Press, Cambridge, 2019.
\newblock \href {https://doi.org/10.1017/9781108380690}
  {\path{doi:10.1017/9781108380690}}.

\bibitem{BruPK16}
S.~L. Brunton, J.~L. Proctor, and J.~N. Kutz.
\newblock Discovering governing equations from data by sparse identification of
  nonlinear dynamical systems.
\newblock {\em Proc. Natl. Acad. Sci. U. S. A.}, 113(15):3932--3937, 2016.
\newblock \href {https://doi.org/10.1073/pnas.1517384113}
  {\path{doi:10.1073/pnas.1517384113}}.

\bibitem{CamLS02}
M.~C. Campi, A.~Lecchini, and S.~M. Savaresi.
\newblock Virtual reference feedback tuning: a direct method for the design of
  feedback controllers.
\newblock {\em Automatica J. IFAC}, 38(8):1337--1346, 2002.
\newblock \href {https://doi.org/10.1016/S0005-1098(02)00032-8}
  {\path{doi:10.1016/S0005-1098(02)00032-8}}.

\bibitem{morwiki_powersystems}
{Cepel}.
\newblock Power system examples.
\newblock hosted at {MORwiki} -- Model Order Reduction Wiki, 2008.
\newblock URL: \url{http://modelreduction.org/index.php/Power_system_examples}.

\bibitem{DePT20}
C.~De~Persis and P.~Tesi.
\newblock Formulas for data-driven control: {S}tabilization, optimality, and
  robustness.
\newblock {\em {IEEE} Trans. Autom. Control}, 65(3):909--924, 2020.
\newblock \href {https://doi.org/10.1109/TAC.2019.2959924}
  {\path{doi:10.1109/TAC.2019.2959924}}.

\bibitem{FarHFetal13}
P.~E. Farrell, D.~A. Ham, S.~W. Funke, and M.~E. Rognes.
\newblock Automated derivation of the adjoint of high-level transient finite
  element programs.
\newblock {\em {SIAM} J. Sci. Comput.}, 35(4):C369--C393, 2013.
\newblock \href {https://doi.org/10.1137/120873558}
  {\path{doi:10.1137/120873558}}.

\bibitem{FliJ13}
M.~Fliess and C.~Join.
\newblock Model-free control.
\newblock {\em Int. J. Control}, 86(12):2228--2252, 2013.
\newblock \href {https://doi.org/10.1080/00207179.2013.810345}
  {\path{doi:10.1080/00207179.2013.810345}}.

\bibitem{FreRM08}
F.~Freitas, J.~Rommes, and N.~Martins.
\newblock {G}ramian-based reduction method applied to large sparse power system
  descriptor models.
\newblock {\em {IEEE} Trans. Power Syst.}, 23(3):1258--1270, 2008.
\newblock \href {https://doi.org/10.1109/TPWRS.2008.926693}
  {\path{doi:10.1109/TPWRS.2008.926693}}.

\bibitem{FroJL18}
R.~Frostig, M.~J. Johnson, and C.~Leary.
\newblock Compiling machine learning programs via high-level tracing.
\newblock In {\em Machine Learning and Systems (MLSys'18)}, 2018.
\newblock URL: \url{https://mlsys.org/Conferences/doc/2018/146.pdf}.

\bibitem{GENI}
GENI.
\newblock {G}lobal {E}nergy {N}etwork {I}nstitute -- {L}inking renewable energy
  resources around the world.
\newblock \url{http://www.geni.org/}, 2017.
\newblock (last accessed on April 21, 2022).

\bibitem{GolV13}
G.~H. Golub and C.~F. Van~Loan.
\newblock {\em Matrix Computations}.
\newblock Johns Hopkins Studies in the Mathematical Sciences. Johns Hopkins
  University Press, Baltimore, fourth edition, 2013.

\bibitem{GosA18}
I.~V. Gosea and A.~C. Antoulas.
\newblock Data-driven model order reduction of quadratic-bilinear systems.
\newblock {\em Numer. Linear Algebra Appl.}, 25(6):e2200, 2018.
\newblock \href {https://doi.org/10.1002/nla.2200}
  {\path{doi:10.1002/nla.2200}}.

\bibitem{GraB08}
M.~C. Grant and S.~P. Boyd.
\newblock Graph implementations for nonsmooth convex programs.
\newblock In V.~D. Blondel, S.~P. Boyd, and H.~Kimura, editors, {\em Recent
  Advances in Learning and Control}, volume 371 of {\em Lect. Notes Control
  Inf. Sci.}, pages 95--110. Springer, London, 2008.
\newblock \href {https://doi.org/10.1007/978-1-84800-155-8_7}
  {\path{doi:10.1007/978-1-84800-155-8_7}}.

\bibitem{GraB20}
M.~C. Grant and S.~P. Boyd.
\newblock {CVX}: Matlab software for disciplined convex programming, version
  2.2.
\newblock \url{http://cvxr.com/cvx}, January 2020.

\bibitem{HeM94}
C.~He and V.~Mehrmann.
\newblock Stabilization of large linear systems.
\newblock In L.~Kulhav{\'a}, M.~K{\'a}rn{\'y}, and K.~Warwick, editors, {\em
  Proceedings of the European IEEE Workshop CMP'94, Prague, September 1994},
  pages 91--100, 1994.

\bibitem{HeiP81}
R.~F. Heinemann and A.~B. Poore.
\newblock Multiplicity, stability, and oscillatory dynamics of the tubular
  reactor.
\newblock {\em Chem. Eng. Sci.}, 36(9):1411--1419, 1981.
\newblock \href {https://doi.org/10.1016/0009-2509(81)80175-3}
  {\path{doi:10.1016/0009-2509(81)80175-3}}.

\bibitem{KraW19}
B.~Kramer and K.~E. Willcox.
\newblock Nonlinear model order reduction via lifting transformations and
  proper orthogonal decomposition.
\newblock {\em {AIAA} J.}, 57(6):2297--2307, 2019.
\newblock \href {https://doi.org/10.2514/1.J057791}
  {\path{doi:10.2514/1.J057791}}.

\bibitem{KraODetal05}
B.~Krauskopf, H.~M. Osinga, E.~J. Doedel, M.~E. Henderson, J.~Guckenheimer,
  A.~Vladimirsky, M.~Dellnitz, and O.~Junge.
\newblock A survey of methods for computing (un)stable manifolds of vector
  fields.
\newblock {\em Int. J. Bifurc. Chaos Appl. Sci. Eng.}, 15(3):763--791, 2005.
\newblock \href {https://doi.org/10.1142/S0218127405012533}
  {\path{doi:10.1142/S0218127405012533}}.

\bibitem{Lei04}
F.~Leibfritz.
\newblock {$COMPl_{e}ib$}: \emph{CO}nstrained \emph{M}atrix-optimization
  \emph{P}roblem \emph{lib}rary -- a collection of test examples for nonlinear
  semidefinite programs, control system design and related problems.
\newblock Tech.-report, University of Trier, 2004.
\newblock URL:
  \url{http://www.friedemann-leibfritz.de/COMPlib_Data/COMPlib_Main_Paper.pdf}.

\bibitem{LeqGMetal03}
O.~Lequin, M.~Gevers, M.~Mossberg, E.~Bosmans, and L.~Triest.
\newblock Iterative feedback tuning of {PID} parameters: comparison with
  classical tuning rules.
\newblock {\em Control Eng. Pract.}, 11(9):1023--1033, 2003.
\newblock \href {https://doi.org/10.1016/S0967-0661(02)00303-9}
  {\path{doi:10.1016/S0967-0661(02)00303-9}}.

\bibitem{LilHPetal15}
T.~P. Lillicrap, J.~J. Hunt, A.~Pritzel, N.~Heess, T.~Erez, Y.~Tassa,
  D.~Silver, and D.~Wierstra.
\newblock Continuous control with deep reinforcement learning.
\newblock e-print 1509.02971, arXiv, 2015.
\newblock Machine Learning (cs.LG).
\newblock \href {https://doi.org/10.48550/arXiv.1509.02971}
  {\path{doi:10.48550/arXiv.1509.02971}}.

\bibitem{LosMRetal13}
L.~Losekann, G.~A. Marrero, F.~J. Ramos-Real, and E.~L.~F. De~Almeida.
\newblock {E}fficient power generating portfolio in {B}razil: {C}onciliating
  cost, emissions and risk.
\newblock {\em Energy Policy}, 62:301--314, 2013.
\newblock \href {https://doi.org/10.1016/j.enpol.2013.07.049}
  {\path{doi:10.1016/j.enpol.2013.07.049}}.

\bibitem{MayA07}
A.~J. Mayo and A.~C. Antoulas.
\newblock A framework for the solution of the generalized realization problem.
\newblock {\em Linear Algebra Appl.}, 425(2--3):634--662, 2007.
\newblock Special issue in honor of P.~A. Fuhrmann, Edited by A.~C. Antoulas,
  U. Helmke, J. Rosenthal, V. Vinnikov, and E. Zerz.
\newblock \href {https://doi.org/10.1016/j.laa.2007.03.008}
  {\path{doi:10.1016/j.laa.2007.03.008}}.

\bibitem{MisP29}
R.~v. Mises and H.~Pollaczek-Geiringer.
\newblock Praktische {V}erfahren der {G}leichungsaufl{\"o}sung.
\newblock {\em {ZAMM} Z. fur Angew. Math. Mech.}, 9(2):152--164, 1929.
\newblock \href {https://doi.org/10.1002/zamm.19290090206}
  {\path{doi:10.1002/zamm.19290090206}}.

\bibitem{MitFD19}
S.~K. Mitusch, S.~W. Funke, and J.~S. Dokken.
\newblock dolfin-adjoint 2018.1: automated adjoints for {FEniCS} and
  {F}iredrake.
\newblock {\em J. Open Source Softw.}, 4(38):1292, 2019.
\newblock \href {https://doi.org/10.21105/joss.01292}
  {\path{doi:10.21105/joss.01292}}.

\bibitem{MOS19}
{MOSEK ApS}.
\newblock The {MOSEK} optimization toolbox for {MATLAB} manual.
  {V}ersion~9.1.9, November 2019.
\newblock URL: \url{https://docs.mosek.com/9.1/toolbox/index.html}.

\bibitem{NijV16}
H.~Nijmeijer and A.~Van~der Schaft.
\newblock {\em Nonlinear Dynamical Control Systems}.
\newblock Springer, New York, NY, fourth edition, 2016.
\newblock \href {https://doi.org/10.1007/978-1-4757-2101-0}
  {\path{doi:10.1007/978-1-4757-2101-0}}.

\bibitem{PeaP20}
J.~W. Pearson and J.~Pestana.
\newblock Preconditioners for {K}rylov subspace methods: {A}n overview.
\newblock {\em GAMM Mitt.}, 43(4):e202000015, 2020.
\newblock \href {https://doi.org/10.1002/gamm.202000015}
  {\path{doi:10.1002/gamm.202000015}}.

\bibitem{Peh19}
B.~Peherstorfer.
\newblock Multifidelity {M}onte {C}arlo estimation with adaptive low-fidelity
  models.
\newblock {\em {SIAM/ASA} J. Uncertainty Quantification}, 7(2):579--603, 2019.
\newblock \href {https://doi.org/10.1137/17M1159208}
  {\path{doi:10.1137/17M1159208}}.

\bibitem{Peh20}
B.~Peherstorfer.
\newblock Sampling low-dimensional {M}arkovian dynamics for preasymptotically
  recovering reduced models from data with operator inference.
\newblock {\em {SIAM} J. Sci. Comput.}, 42(5):A3489--A3515, 2020.
\newblock \href {https://doi.org/10.1137/19M1292448}
  {\path{doi:10.1137/19M1292448}}.

\bibitem{PehGW17}
B.~Peherstorfer, S.~Gugercin, and K.~Willcox.
\newblock Data-driven reduced model construction with time-domain {L}oewner
  models.
\newblock {\em {SIAM} J. Sci. Comput.}, 39(5):A2152--A2178, 2017.
\newblock \href {https://doi.org/10.1137/16M1094750}
  {\path{doi:10.1137/16M1094750}}.

\bibitem{PehW16}
B.~Peherstorfer and K.~Willcox.
\newblock Data-driven operator inference for nonintrusive projection-based
  model reduction.
\newblock {\em Comput. Methods Appl. Mech. Eng.}, 306:196--215, 2016.
\newblock \href {https://doi.org/10.1016/j.cma.2016.03.025}
  {\path{doi:10.1016/j.cma.2016.03.025}}.

\bibitem{PerUS21}
J.~C. Perdomo, J.~Umenberger, and M.~Simchowitz.
\newblock Stabilizing dynamical systems via policy gradient methods.
\newblock In M.~Ranzato, A.~Beygelzimer, Y.~Dauphin, P.~S. Liang, and
  J.~Wortman~Vaughan, editors, {\em Advances in Neural Information Processing
  Systems}, volume~34, pages 29274--29286, 2021.

\bibitem{ProBK16}
J.~L. Proctor, S.~L. Brunton, and J.~N. Kutz.
\newblock Dynamic mode decomposition with control.
\newblock {\em {SIAM} J. Appl. Dyn. Syst.}, 15(1):142--161, 2016.
\newblock \href {https://doi.org/10.1137/15M1013857}
  {\path{doi:10.1137/15M1013857}}.

\bibitem{Ruh84}
A.~Ruhe.
\newblock Rational {K}rylov sequence methods for eigenvalue computation.
\newblock {\em Linear Algebra Appl.}, 58:391--405, 1984.
\newblock \href {https://doi.org/10.1016/0024-3795(84)90221-0}
  {\path{doi:10.1016/0024-3795(84)90221-0}}.

\bibitem{Saa88}
Y.~Saad.
\newblock Projection and deflation method for partial pole assignment in linear
  state feedback.
\newblock {\em {IEEE} Trans. Autom. Control}, 33(3):290--297, 1988.
\newblock \href {https://doi.org/10.1109/9.406} {\path{doi:10.1109/9.406}}.

\bibitem{SafT95}
M.~G. Safonov and T.-C. Tsao.
\newblock The unfalsified control concept: {A} direct path from experiment to
  controller.
\newblock In B.~.A Francis and A.~R. Tannenbaum, editors, {\em Feedback
  Control, Nonlinear Systems, and Complexity}, volume 202 of {\em Lect. Notes
  Control Inf. Sci.}, pages 196--214. Springer, Berlin, Heidelberg, 1995.
\newblock \href {https://doi.org/10.1007/BFb0027678}
  {\path{doi:10.1007/BFb0027678}}.

\bibitem{SchTW18}
H.~Schaeffer, G.~Tran, and R.~Ward.
\newblock Extracting sparse high-dimensional dynamics from limited data.
\newblock {\em {SIAM} J. Appl. Math.}, 78(6):3279--3295, 2018.
\newblock \href {https://doi.org/10.1137/18M116798X}
  {\path{doi:10.1137/18M116798X}}.

\bibitem{Sch10}
P.~J. Schmid.
\newblock Dynamic mode decomposition of numerical and experimental data.
\newblock {\em J. Fluid Mech.}, 656:5--28, 2010.
\newblock \href {https://doi.org/10.1017/S0022112010001217}
  {\path{doi:10.1017/S0022112010001217}}.

\bibitem{SchU16}
P.~Schulze and B.~Unger.
\newblock Data-driven interpolation of dynamical systems with delay.
\newblock {\em Syst. Control Lett.}, 97:125--131, 2016.
\newblock \href {https://doi.org/10.1016/j.sysconle.2016.09.007}
  {\path{doi:10.1016/j.sysconle.2016.09.007}}.

\bibitem{SchUBetal18}
P.~Schulze, B.~Unger, C.~Beattie, and S.~Gugercin.
\newblock Data-driven structured realization.
\newblock {\em Linear Algebra Appl.}, 537:250--286, 2018.
\newblock \href {https://doi.org/10.1016/j.laa.2017.09.030}
  {\path{doi:10.1016/j.laa.2017.09.030}}.

\bibitem{SilLHetal14}
D.~Silver, G.~Lever, N.~Heess, T.~Degris, D.~Wierstra, and M.~Riedmiller.
\newblock Deterministic policy gradient algorithms.
\newblock {\em Proceedings of the 31st International Conference on Machine
  Learning, Beijing, China, 2014. JMLR: W{\&}CP}, 32:I--387--I--395, 2014.
\newblock URL: \url{https://hal.inria.fr/hal-00938992}.

\bibitem{Sim96}
V.~Sima.
\newblock {\em Algorithms for Linear-Quadratic Optimization}, volume 200 of
  {\em Monographs and Textbooks in Pure and Applied Mathematics}.
\newblock Chapman and Hall/CRC, New York, 1996.
\newblock \href {https://doi.org/10.1201/9781003067450}
  {\path{doi:10.1201/9781003067450}}.

\bibitem{Ste01}
G.~W. Stewart.
\newblock A {K}rylov--{S}chur algorithm for large eigenproblems.
\newblock {\em {SIAM} J. Matrix Anal. Appl.}, 23(3):601--614, 2001.
\newblock \href {https://doi.org/10.1137/S0895479800371529}
  {\path{doi:10.1137/S0895479800371529}}.

\bibitem{SwiKHetal20}
R.~Swischuk, B.~Kramer, C.~Huang, and K.~Willcox.
\newblock Learning physics-based reduced-order models for a single-injector
  combustion process.
\newblock {\em AIAA J.}, 58(6):2658--2672, 2020.
\newblock \href {https://doi.org/10.2514/1.J058943}
  {\path{doi:10.2514/1.J058943}}.

\bibitem{TuRLetal14}
J.~H. Tu, C.~W. Rowley, D.~M. Luchtenburg, S.~L. Brunton, and J.~N. Kutz.
\newblock On dynamic mode decomposition: {T}heory and applications.
\newblock {\em J. Comput. Dyn.}, 1(2):391--421, 2014.
\newblock \href {https://doi.org/10.3934/jcd.2014.1.391}
  {\path{doi:10.3934/jcd.2014.1.391}}.

\bibitem{VanD96}
P.~Van~Overschee and B.~De~Moor.
\newblock {\em Subspace Identification for Linear Systems: {T}heory,
  Implementation, Applications}.
\newblock Springer, Boston, MA, 1996.
\newblock \href {https://doi.org/10.1007/978-1-4613-0465-4}
  {\path{doi:10.1007/978-1-4613-0465-4}}.

\bibitem{VanETetal20}
H.~J. Van~Waarde, J.~Eising, H.~J. Trentelman, and M.~K. Camlibel.
\newblock Data informativity: {A} new perspective on data-driven analysis and
  control.
\newblock {\em {IEEE} Trans. Autom. Control}, 65(11):4753--4768, 2020.
\newblock \href {https://doi.org/10.1109/TAC.2020.2966717}
  {\path{doi:10.1109/TAC.2020.2966717}}.

\bibitem{Var81}
A.~Varga.
\newblock A {S}chur method for pole assignment.
\newblock {\em {IEEE} Trans. Autom. Control}, 26(2):517--519, 1981.
\newblock \href {https://doi.org/10.1109/TAC.1981.1102605}
  {\path{doi:10.1109/TAC.1981.1102605}}.

\bibitem{supWer23}
S.~W.~R. Werner.
\newblock Code, data and results for numerical experiments in ``{C}ontext-aware
  controller inference for stabilizing dynamical systems from scarce data''
  (version 1.1), January 2023.
\newblock \href {https://doi.org/10.5281/zenodo.7530566}
  {\path{doi:10.5281/zenodo.7530566}}.

\bibitem{WerP22}
S.~W~.R. Werner and B.~Peherstorfer.
\newblock On the sample complexity of stabilizing linear dynamical systems from
  data.
\newblock e-print 2203.00474, arXiv, 2022.
\newblock Optimization and Control (math.OC), accepted for publication in
  Found. Comput. Math.
\newblock \href {https://doi.org/10.48550/arXiv.2203.00474}
  {\path{doi:10.48550/arXiv.2203.00474}}.

\bibitem{Zho12}
Y.~B. Zhou.
\newblock {\em Model Reduction for Nonlinear Dynamical Systems with Parametric
  Uncertainties}.
\newblock PhD thesis, Massachusetts Institute of Technology, Cambridge,
  Massachusetts, USA, 2012.
\newblock URL: \url{http://hdl.handle.net/1721.1/77118}.

\bibitem{ZieDG19}
A.~Ziessler, M.~Dellnitz, and R.~Gerlach.
\newblock The numerical computation of unstable manifolds for infinite
  dimensional dynamical systems by embedding techniques.
\newblock {\em {SIAM} J. Appl. Dyn. Syst.}, 18(3):1265--1292, 2019.
\newblock \href {https://doi.org/10.1137/18M1204395}
  {\path{doi:10.1137/18M1204395}}.

\end{thebibliography}

\end{document}